\RequirePackage{amsthm}
\documentclass[pdflatex]{sn-jnl}


\usepackage{graphicx}%
\usepackage{multirow}%
\usepackage{amsmath,amssymb,amsfonts}%
\usepackage{amsthm}%
\usepackage{mathrsfs}%
\usepackage[title]{appendix}%
\usepackage{xcolor}%
\usepackage{textcomp}%
\usepackage{manyfoot}%
\usepackage{booktabs}%
\usepackage{algorithm}%
\usepackage{algorithmicx}%
\usepackage{algpseudocode}%
\usepackage{listings}%
\usepackage{tikz, pgfplots}
\usetikzlibrary {arrows.meta}
\pgfplotsset{compat=1.18}

\theoremstyle{thmstyleone}%
\newtheorem{theorem}{Theorem}[section]
\newtheorem{proposition}[theorem]{Proposition}

\newtheorem{lemma}[theorem]{Lemma}

\newtheorem{q}[theorem]{Question}

\theoremstyle{thmstyletwo}%
\newtheorem{remark}[theorem]{Remark}%

\theoremstyle{thmstylethree}%

\raggedbottom

\AtEndDocument{%
  \par
  \medskip
  \textsc{Department of Mathematics, The Ohio State University, Columbus, OH, USA, 43210} \\
  \begin{tabular}{ll}%
    \hspace{12pt}
    (Gogolev) &
    \textit{Email address}: \texttt{gogolyev.1@osu.edu}\\
    \hspace{12pt}
    (Keck) &
    \textit{Email address}: 
    \texttt{keck.140@osu.edu}\\
    \hspace{12pt}
    (Lewis) &
    \textit{Email address}: \texttt{lewis.3164@osu.edu}
  \end{tabular}}

\begin{document}

\title[Article Title]{Bouncing Outer Billiards}

\author{\fnm{Andrey} \sur{Gogolev}}

\author{\fnm{Levi} \sur{Keck}}

\author{\fnm{Kevin} \sur{Lewis}}

\abstract{We introduce a new class of billiard-like system, ``bouncing outer billiards" which are 3-dimensional cousins of outer billiards of Neumann and Moser. We prove that bouncing outer billiard on a smooth convex body has at least four 1-parameter families of fixed points. We also fully describe dynamics of bouncing outer billiard on a line segment. Finally we carry out numerical experiments suggesting very complicated (non-ergodic) behavior for several shapes including the square and an ellipse. }

\maketitle

\section{Introduction}\label{sec1}

Outer billiards are dynamical systems introduced by Neumann in 1959 \cite{bib3} and then popularized by Moser in his lecture on stability of the solar system \cite{bib1, bib2}. The field of outer billiards became very active about 20 years ago. In this paper we suggest similar more complicated billiard systems called {\it bouncing outer billiards,} which we proceed to define. 

Let $S \subset \mathbb{R}^2$ be a compact convex set with smooth boundary. The {\it visibility domain}  $V_S$ consists of all  pairs $(p, v)$ where $p \in \mathbb{R}^2 \setminus int(S)$, and $v\in T^1_p\mathbb R^2$ is a unit vector based at $p$ such that the ray $\overrightarrow{R}$ spanned by $v$ has a non-empty intersection with $S$.

We now define the dynamical system $F_S\colon V_S\to V_S$ in the following way. Given an initial condition $(p,v)\in V_S$ the corresponding ray $\overrightarrow{R}$ reflects off the convex body at a point $w$ as $\overrightarrow{R}'$ in the usual way --- the angle of incidence equals to the angle of reflection.
Next we apply the outer  billiard law and consider the point $p'\in\overrightarrow{R'}$ such that $\|p-w\|=\|p'-w\|$ as indicated on Figure~\ref{fig:BOB_Dynamics}.

\begin{figure}
    \centering
    \includegraphics[width=0.8\linewidth]{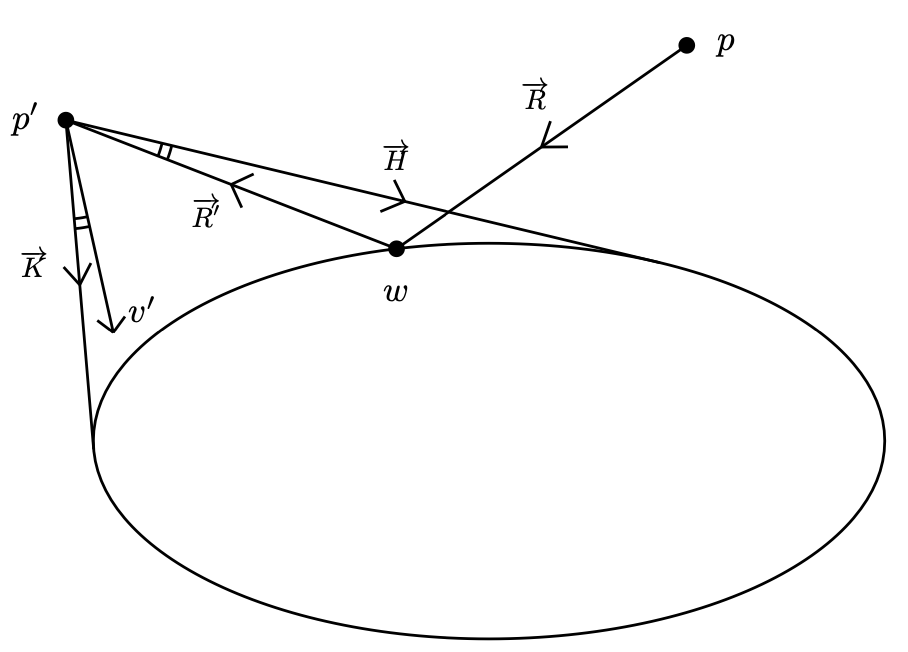}
  
    \caption{Bouncing Outer Billiards Dynamics}
    \label{fig:BOB_Dynamics}
\end{figure}

Finally, we will use the {\it visibility angle reflection rule} as follows. Let $\overrightarrow{H}$ and $\overrightarrow{K}$ be the rays at $p'$ which are tangent to $S$. Let $u$ be the unit vector based at $p'$ pointing to $w$ (in the direction opposite to $\overrightarrow{R}'$). Clearly $u$ is inside the angle defined by $\overrightarrow{H}$ and $\overrightarrow{K}$. Let $v'$ be the reflection of $u$ across the angle bisector of $\angle(\overrightarrow{H},\overrightarrow{K})$ as shown on Figure~\ref{fig:BOB_Dynamics}. This completes the definition of bouncing billiard dynamics.
$$
F_S(p,v)=(p',v').
$$
We will drop the subscript $S$ and simply write $F$ when no confusion is possible.

\begin{remark} 

It is easy to see that if $\overrightarrow{R}$ is tangent  to $S$ then we have the classical outer billiard dynamics. Hence the outer billiard is simply the restriction $F|_{\partial V_S}$ of the bouncing outer billiard to the boundary of the visibility space.

\medskip
    {\it\!\!\!\!\!\!\!\!\!\!\! Remark 1.2\,\,\,} If $S$ is not smooth, e.g. a polygon, then the angle reflection law is undefined for some initial conditions. However such initial conditions form a set of zero Lebesgue measure since the boundary of a convex body is differentiable almost everywhere. Hence bouncing billiard dynamics still makes sense for almost every initial condition, but the above relation to outer billiard is obscured.

    \medskip
{\it\!\!\!\!\!\!\!\!\!\!\!  Remark 1.3\,\,\,} S. Tabachnikov considered unfolding the outer billiard map into a family of symplectomorphisms given by the first two steps in the definition of the bouncing outer billiard~\cite{Tab}. However, to the best of our knowledge, the visibility angle reflection rule was not considered before.
\end{remark}

In the next section we establish existence of families of fixed points for bouncing outer billiards. Then we fully describe integrable twist map dynamics of bouncing outer billiards on a line segment. Finally, we present results of our numerical explorations in the last section.

We would like to pose two questions.

\begin{q}
    Does every orbit of the bouncing outer billiard on a smooth convex body remain bounded?
\end{q}
We were not able to detect any unbounded orbits numerically.

It is easy to check that bouncing outer billiards are conservative, that is, they preserve the Lebesgue measure on $V_S$ (see Appendix A).

\begin{q}
    Does there exist positive volume ergodic components?
\end{q}
We have found some orbits which appear to fill up 2-dimensional sets. However in the 3-dimensional spave $V_S$ such orbits seem to be confined to 2-dimensional surfaces.

\medskip

{\bfseries Acknowledgements.} This paper is a result of an REU project of summer 2024 at The Ohio State University. The authors are very grateful to Sergei Tabachnikov who provided several illuminating remarks on earlier drafts. The authors would like to acknowledge the support provided the NSF grant DMS-2247747. 


\section{Fixed points}
A natural question for any dynamical system is whether or not there exist fixed points, and if so, how to find them.

\begin{theorem}\label{fixedpointtheorem}
For any convex  $S$ with $C^3$ boundary the associated billiard map has uncountably many fixed points, which come in at least four local 1-parameter families.
\end{theorem}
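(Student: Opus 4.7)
The plan is to convert the fixed point equation $F(p,v)=(p,v)$ into a single scalar equation on the cylinder $\partial S\times(0,\infty)$, perform a local asymptotic analysis as one approaches $\partial S$, and then invoke the classical four-vertex theorem to count solutions.

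First I would unpack the defining conditions. The equality $p'=p$ is equivalent to the ray $\overrightarrow{R}$ striking $\partial S$ perpendicularly; this forces $v=-n(w)$, where $w$ is the point of incidence, which is then automatically the unique nearest point of $\partial S$ to $p$. Given $p'=p$, the auxiliary vector $u$ pointing from $p'$ to $w$ coincides with $v$, so the equality $v'=v$ reduces to the condition that $v$ bisect the angle between the two tangent rays $\overrightarrow{H}, \overrightarrow{K}$ from $p$ to $S$. Hence fixed points are parametrized by $(\theta,t)\in\partial S\times(0,\infty)$ via $p=\gamma(\theta)+tn(\theta)$, $v=-n(\theta)$, subject to the single scalar equation $f(\theta,t)=0$, where $f(\theta,t)$ denotes the signed angle between $-n(\theta)$ and the bisector of the tangent cone from $p$ to $S$.

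The heart of the argument is the asymptotic analysis of $f$ as $t\to 0^+$. Place $\gamma(\theta)$ at the origin with tangent along the $x$-axis, so that locally $\partial S$ is the graph $y=-\tfrac{\kappa}{2}x^{2}-\tfrac{\kappa'}{6}x^{3}+O(x^{4})$, where $\kappa=\kappa(\theta)$ is the curvature and $\kappa'$ is its arc-length derivative (the $C^{3}$ hypothesis enters precisely here). For $p=(0,t)$ the tangent-line equation $t=g(x)-xg'(x)$ can be solved perturbatively in $\sqrt{t}$; the two roots $x_{\pm}$ split at leading order into the symmetric $\pm\sqrt{2t/\kappa}$ but acquire a common asymmetric shift of order $\kappa'\cdot t$. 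Tracking the angles $\alpha_{\pm}$ that the rays $q_{\pm}-p$ make with $-n(\theta)$, the $O(\sqrt{t})$ symmetric pieces cancel in $\alpha_{+}+\alpha_{-}$ and one is left with
$$f(\theta,t)=\tfrac{1}{2}\bigl(\alpha_{+}+\alpha_{-}\bigr)=-\frac{\kappa'(\theta)}{3\kappa(\theta)}\,t+O(t^{3/2}).$$
The renormalization $\tilde f(\theta,t):=f(\theta,t)/t$ therefore extends continuously to $t=0$ with $\tilde f(\theta,0)=-\kappa'(\theta)/(3\kappa(\theta))$, whose zeros on $\partial S$ are precisely the vertices of $\partial S$.

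For the conclusion, the four-vertex theorem---more precisely its proof via the identity $\oint\kappa'(\ell)\,L(\gamma(\ell))\,d\ell=0$ valid for every affine $L$---guarantees that $\kappa'$ has at least four sign changes around $\partial S$, unless $\kappa'\equiv 0$. Each such sign change persists for all sufficiently small $t>0$ by continuity of $\tilde f$ on $\partial S\times[0,\varepsilon)$ together with the intermediate value theorem, producing four continuous curves $\theta_{i}(t)$ of zeros of $\tilde f$ and hence four local one-parameter families of fixed points of $F$. The degenerate disk case $\kappa'\equiv 0$ is handled separately: by rotational symmetry every point on every outward normal, paired with the inward normal direction, is already a fixed point, so one gets a full two-parameter family and a fortiori uncountably many one-parameter subfamilies. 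I expect the main obstacle to be the cubic-order expansion in paragraph three: the symmetric contributions must be tracked through two orders of cancellation so that only the genuinely $\kappa'$-proportional correction survives, and it is this cancellation that forces the renormalization by $t$ to see the vertices of $\partial S$.
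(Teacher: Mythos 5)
Your proposal is correct, and it takes a genuinely different route from the paper. Both arguments reduce the fixed--point equation to the same scalar condition (the ray must strike $\partial S$ perpendicularly and $v$ must bisect the tangent cone) and both finish with the four--vertex theorem plus the intermediate value theorem; the difference is in how the sign change is produced. The paper proves a non-perturbative lemma: for any sufficiently short arc of strictly monotone curvature whose endpoint tangents meet at $p$, the bisector ray from $p$ bounces toward the low--curvature end, established via the integral identity $\int_{s_1}^{s_2}k\sin(K(s_2)-K)\,ds=\int_{s_0}^{s_1}k\sin K\,ds$ and monotonicity of $k$; this locates fixed points that may lie far from $S$, parametrized by the arc. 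You instead work in a collar of $\partial S$, parametrize candidates by the foot of the normal and the distance $t$, and expand the bisector defect as $t\to0^+$. I checked your computation: with $y=-\tfrac{\kappa}{2}x^{2}-\tfrac{\kappa'}{6}x^{3}+o(x^{3})$ the tangency equation $t=g(x)-xg'(x)=\tfrac{\kappa}{2}x^{2}+\tfrac{\kappa'}{3}x^{3}+o(x^{3})$ gives $x_{\pm}=\pm\sqrt{2t/\kappa}-\tfrac{2\kappa'}{3\kappa^{2}}t+o(t)$ and $\alpha_{+}+\alpha_{-}=-\tfrac{2\kappa'}{3\kappa}t+o(t)$, confirming your leading coefficient $-\kappa'/(3\kappa)$; this is an appealing identification of the infinitesimal obstruction and ties the families directly to the vertices. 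Two small caveats: with only $C^{3}$ regularity the remainder is $o(t)$ rather than $O(t^{3/2})$ (still uniform in $\theta$ by compactness, which is all you need); and the intermediate value theorem yields, for each small $t$, at least one zero in each of four disjoint arcs --- four $t$-parametrized families --- rather than literally continuous selections $\theta_{i}(t)$, but this matches the level of precision of the paper's own conclusion. If $\kappa$ vanishes at a curvature minimum the ratio $\kappa'/\kappa$ degenerates there, but one can apply the intermediate value theorem between nearby points of positive curvature, so the argument survives.
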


Clearly, a point $(p,v)$ is fixed if and only if $v$ is the bisector of the angle formed by the tangent rays from $p$ to $S$. Therefore, given a point $p\notin S$, consider the angle given by the two tangent lines from $p$ to $S$ and let $v_p\in T_p\mathbb R^2$ be the vector spanning the angle bisector. The idea of the proof is to find a curve connecting two points, say, $p$ and $q$ such that the ray of $v_p$ ``bounces to the left'' and the ray of $v_q$ ``bounces to the right.'' Then, by the intermediate value theorem, there exists a fixed point $(r,v_r)$ on such curve.


We note right away that if $\partial S$ has a circle subarc with constant curvature then then there is whole 2-parameter family of fixed points in proximity of such arc. Hence we can assume, due to $C^3$ regularity of the boundary that there exist arcs with strictly increasing and with strictly decreasing curvature. 

The following is our main lemma.

\begin{lemma}\label{fixedpointlemma}
Let $f\colon[s_0,s_2]\to\partial S$ be a counter-clockwise arc-length parametrzation of a subarc of $\partial S$ along which the curvature is strictly increasing. Assume that this arc is sufficiently short so that the tangent lines at $f(s_0)$ and $f(s_2)$ intersect at a point $p$ as indicated on Figure~\ref{fig:fixedpointlemmasetup}. 

Then the angle bisector ray spanned by $v_p$ will ``bounce off in the direction of $f(s_0)$,'' that is, after reflecting off $S$ the ray will intersect the tangent segment $a$.
\end{lemma}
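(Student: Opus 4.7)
The plan is to place $p$ at the origin with the bisector $v_p$ along the positive $x$-axis, so that the tangent lines $\ell_0,\ell_2$ from $p$ to $S$ make angles $+\beta$ and $-\beta$ with the $x$-axis, with $f(s_0)$ in the upper half-plane and $f(s_2)$ in the lower. Let $w$ be the point where the bisector ray first meets $\partial S$ and let $\theta_w$ be the tangent angle of $\partial S$ at $w$. A direct application of the reflection law shows that the reflected ray leaves $w$ in direction $(\cos 2\theta_w,\sin 2\theta_w)$, so for it to meet the tangent segment $a$ from $p$ to $f(s_0)$ it suffices to show this direction points into the upper half-plane. Parametrising the arc by tangent angle $\theta\in[\theta_0,\theta_2]$, with total turning $T=\theta_2-\theta_0=\pi-2\beta$, and writing $T_0=\theta_w-\theta_0$ and $T_2=\theta_2-\theta_w$, the target reduces to the strict inequality $T_0<T/2<T_2$.

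The key observation is that $w$, lying on the angle bisector of $\ell_0$ and $\ell_2$, is equidistant from those two lines. In local coordinates with $\ell_0$ (resp.\ $\ell_2$) as the $x$-axis and $f(s_0)$ (resp.\ $f(s_2)$) at the origin, the change of variable $ds=r(\theta)\,d\theta$, where $r=1/\kappa$ is the radius of curvature, expresses the two perpendicular distances as
\[
d_1=\int_0^{T_0} r(\theta_0+\alpha)\sin\alpha\,d\alpha, \qquad d_2=\int_0^{T_2} r(\theta_2-\alpha)\sin\alpha\,d\alpha.
\]
A short computation gives $(d_1-d_2)'(T_0)=r(\theta_0+T_0)[\sin T_0+\sin(T-T_0)]$, which is strictly positive on $(0,T)$, so the bisector equation $d_1=d_2$ pins down a unique value of $T_0$.

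It remains to compare at the symmetric value $T_0=T_2=T/2$. There the two integrands share the factor $\sin\alpha$, while $r$ is sampled on $[\theta_0,\theta_0+T/2]$ on the left and on $[\theta_2-T/2,\theta_2]$ on the right. Strictly increasing curvature means strictly decreasing $r$, and since $\theta_0+\alpha<\theta_2-\alpha$ for all $\alpha\in(0,T/2)$, we get $r(\theta_0+\alpha)>r(\theta_2-\alpha)$ pointwise, whence $d_1>d_2$ at $T_0=T/2$. Strict monotonicity then pushes the actual root of $d_1=d_2$ below $T/2$, giving $T_0<T_2$ and placing the reflected ray in the upper half-plane. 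I anticipate the main bookkeeping challenge will be fixing the signs and orientation in the tangent-angle parametrisation and identifying segment $a$ with the correct half-plane; once that is set, the argument reduces to the clean pointwise comparison above, and the ``sufficiently short arc'' hypothesis keeps the intersection of the reflected ray with $\ell_0$ inside segment $a$ rather than on its extension beyond $f(s_0)$.
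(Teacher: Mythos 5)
Your proof is correct and is essentially the paper's argument in different clothing: both reduce the claim to comparing the perpendicular distances from a point of the arc to the two tangent lines, and both prove the key inequality by converting arc-length integrals into tangent-angle integrals (your $ds=r\,d\theta$ is the paper's substitution $u=K(s)$) and then comparing pointwise using the strict monotonicity of the curvature. The only cosmetic difference is that the paper evaluates the distance comparison at the half-turning point $f(s_1)$ and concludes geometrically which side of the bisector it lies on, whereas you evaluate at the bounce point $w$ and add an explicit monotonicity argument for $d_1-d_2$ to locate it; the two steps are interchangeable.
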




\begin{proof}
Let $k(s)$ be the curvature at $f(s)$, and let $K(s)=\int_{s_0}^{s}k(t)dt$.  Since we are using arc-length parameterization, $K(s)$ is the angle between the tangent lines at $f(s_0)$ and $f(s)$.
There is a unique $s_1$ such that $K(s_1)={K(s_2)}/{2}$. Then the tangent line at $f(s_1)$ is perpendicular to $v_p$. Hence to prove the claim of the lemma it suffices to show that the distance from $f(s_1)$ to the tangent line $b$ is less than the distance from $f(s_1)$ to the tangent line $a$. This can be expressed by the following inequality:


\begin{figure}
    \centering
    \includegraphics[width=1\linewidth]{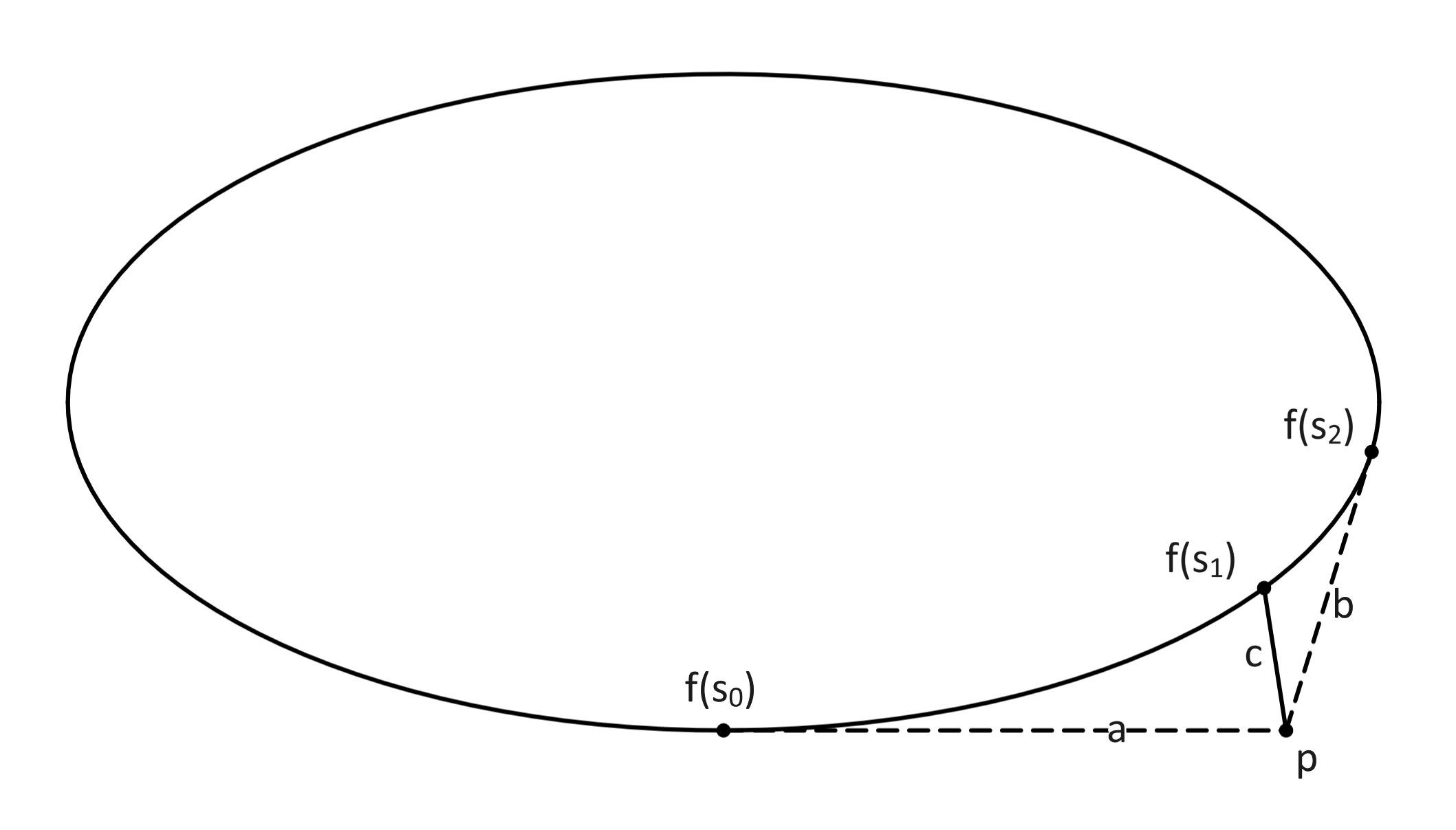}
    \caption{Fixed Point Lemma Setup}
    \label{fig:fixedpointlemmasetup}
\end{figure}



\begin{equation*}
    \int_{s_1}^{s_2}\sin(K(s_2) - K(s))ds < \int_{s_0}^{s_1}\sin(K(s))ds \tag{$\ast$}
\end{equation*}
To prove this inequality, we can start with the following statements by change of variables:
\begin{equation*}
\begin{aligned}
\int_{s_1}^{s_2}\sin(K(s_2) - K(s))k(s)ds &= \int_{0}^{K(s_1)}sin(u)du \\
\int_{s_0}^{s_1}\sin(K(s))k(s)ds &= \int_{0}^{K(s_1)}sin(v)dv
\end{aligned}
\end{equation*}
This gives
\begin{equation*}
    \int_{s_1}^{s_2}k(s)\sin(K(s_2) - K(s))ds = \int_{s_0}^{s_1}k(s)\sin(K(s))ds
\end{equation*}
Since curvature $k\colon[s_0,s_2]\to\mathbb R_+$ is increasing the posited inequality follows proving the lemma.
\end{proof}



\begin{proof}[Proof of Theorem~\ref{fixedpointtheorem}.]

Consider a local minimum (or maximum) of the curvature of $\partial S$. (By the 4-vertex theorem at least four local extrema exist.) On one side there is a short arc with increasing curvature and on the other side there is short arc with decreasing  curvature. Applying the above lemma to the first arc we obtain an initial condition $(p,v_p)$ which ``bounces to the left'' and similarly, applying (the analogue of) the lemma to the arc with increasing curvature we obtain an initial condition $(q, v_q)$ which ``bounces to the right.'' It remains to connect $p$ and $q$ by an arc disjoint with $S$ an apply the intermediate value theorem.
\end{proof}




It is clear from the above proof that each vertex of $\partial S$ yields a 1-parameter family of fixed points. These families could merge away from $S$. We would like to pose the following question.
\begin{q}
Let $S$ be a convex domain with $C^3$ boundary. Does every closed curve around it contain at least one fixed point of the bouncing outer billiard map on $S$?
\end{q}

S. Tabachnikov considered a question of very similar flavor and eventually found a counterexample~\cite{bib4}.

\section{Bouncing on a Line Segment}\label{secLS}

\subsection{Parameterizing the Dynamics}\label{subsecPD}

This section focuses on the behavior of the bouncing outer billiards system on a line segment. Since all segments are congruent up to scaling, we only consider the segment on the $x$-axis from -1 to 1. For convenience, we will consider only initial points $p$ with positive $y$-values, as points with negative $y$-values are symmetric.

Recall that we denote the visibility domain by $V$. Consider initial condition $(p, v)~\in~V$, where $p = (x, y)$, and let $\theta = \arg(v) + \frac{\pi}{2}$. The initial conditions define a ray from the point $p$ with slope $\tan(\theta - \frac{\pi}{2})$. Using this equation, we can derive:
\begin{equation*}
    w = (x + y  \tan(\theta), 0)
\end{equation*}
\begin{equation*}   
    p' = (x', y') = (x + 2y\tan(\theta), y)
\end{equation*}

Note that the $y$-coordinate of the initial point will remain constant on the orbit. From this point forward, we will refer to the $y$-value of the initial point as height and denote it by $h$.

Next, we obtain that the angle from $p'$ to the left and right endpoints of the segment are given by $\arctan\left(\frac{1-x'}{h}\right)$ and $\arctan\left(\frac{-1-x'}{h}\right)$, respectively. Also, the angle from $p'$ to $w$ is given by $-\theta$. Applying the visibility angle reflection rule yields:

\begin{equation*}
\begin{aligned}
\theta' &= \arctan\left(\frac{1-x'}{h}\right) + \theta  +\arctan\left(\frac{-1-x'}{h}\right)
\end{aligned}
\end{equation*}
\vspace{0.3 cm}
Summing up, the dynamics, $F(x, h, \theta) = (x', h, \theta')$ is given by:

\begin{equation}\label{dynamics}
\begin{gathered} 
    x' = x + 2h  \tan(\theta) \\
    \theta' = \theta + \arctan\left(\frac{1-x'}{h}\right) + \arctan\left(\frac{-1-x'}{h}\right)
\end{gathered}
\end{equation}

\subsection{A Second Invariant}\label{subsec22}

In Section~\ref{subsecPD}, we observed that the height $h$ is an invariant. In this section, we will demonstrate the existence of a second invariant.

We define a new coordinate system, relative to which the it's easier to see a second invariant. First consider the change of coordinates $g(x, h, \theta) = (w, h, d)$ given by

\begin{equation}\label{transform1}
    \begin{cases}
        w = x + h \tan(\theta) \\
        d = h \tan(\theta)
    \end{cases}
\end{equation}
with the $h$-coordinate remaining unchanged. The coordinate $w$ represents the $x$-value of the bounce point and the coordinate $d$ represents the signed difference between the $w$ and the $x$-value of the initial point. The inverse coordinate transformation is given by:

\begin{equation}\label{inverse_transform_1}
    \begin{cases}
        x = w-d \\
        \theta = \arctan\left(\frac{d}{h}\right)
    \end{cases}
\end{equation}

Now, we seek to understand dynamics in these new coordinates. Let $f(w, h, d) = g \circ F \circ g^{-1}(w, h, d)$. We let $f(w, h, d)$ be denoted by $(w', h, d')$, which we wish to write in terms of $w$, $h$, and $d$. First, sing~(\ref{dynamics}) and~(\ref{transform1}) yields:
\begin{equation} \label{coordinate_relation}
    x' = w + d
\end{equation}

Now, we will use (\ref{dynamics}) to rewrite the equation for $d'$. Following this, we simplify and use (\ref{transform1}) and (\ref{coordinate_relation}) to rewrite all instances of $x'$ and $h  \tan(\theta)$ in terms of $w$ and $d$.
\begin{equation} \label{d'}
\begin{aligned}
    d' &= h  \tan(\theta') \\ 
    &= h  \tan\left(\theta + \arctan\left(\frac{1-x'}{h}\right) + \arctan\left(\frac{-1-x'}{h}\right)\right) \\
    &= h  \left(\frac{\tan(\theta) + \frac{-2x'h}{1+h^2-(x')^2}}{1+ \frac{2x'h}{1+h^2-(x')^2}}\right) \\
    &= \frac{h  \tan(\theta) + h^3  \tan(\theta) - h(x')^2  \tan(\theta) - 2x'h^2}{1+h^2-(x')^2 + 2x'h  \tan(\theta)} \\
    &= \frac{d^3 + 2h^2w+2d^2w+dw^2+h^2d-d}{w^2-d^2-h^2-1}
\end{aligned}
\end{equation}
\vspace{0.3 cm}
Finally, we can calculate $w'$ by using the relationship $w' = d' + x'$, which gives:
\begin{equation}\label{w'}
\begin{aligned}
    w' &= w + d + \frac{d^3 + 2h^2w+2d^2w+dw^2+h^2d-d}{w^2-d^2-h^2-1} \\
    &= \frac{w^3+d^2w+h^2w+2dw^2-w-2d}{w^2-d^2-h^2-1}
\end{aligned}
\end{equation}

\vspace{0.3 cm}

We will now show the existence of a second invariant denoted $a^2$, as $a$ will later be shown to be the semi-axis of an ellipse.

\begin{proposition}\label{propA}
    Quantity $a^2 := \frac{h^2w^2+d^2}{h^2+d^2}\in(-1,1)$ is preserved under dynamics. That is, 
    $$\frac{h^2w^2+d^2}{h^2+d^2} = \frac{h^2(w')^2+(d')^2}{h^2+(d')^2}.$$
\end{proposition}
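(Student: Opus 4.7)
The plan is to verify the identity by a short algebraic substitution, using a change of notation that makes the key cancellation transparent. Cross-multiplying the desired equality and using the observation $(h^2w^2+d^2)-(h^2+d^2)=h^2(w^2-1)$, the problem reduces to showing
\begin{equation*}
    (h^2+d^2)(w')^2+(1-w^2)(d')^2=h^2w^2+d^2.
\end{equation*}
Writing $d'=N_d/D$ and $w'=N_w/D$, where $D=w^2-d^2-h^2-1$ is the common denominator in (\ref{d'}) and (\ref{w'}) and $N_d, N_w$ are the corresponding numerators, this becomes the polynomial identity
\begin{equation*}
    (h^2+d^2)N_w^2+(1-w^2)N_d^2=(h^2w^2+d^2)\,D^2.
\end{equation*}

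The key step is to introduce the abbreviations $P=w^2-1$ and $Q=d^2+h^2$. A short regrouping of the numerators in (\ref{d'}) and (\ref{w'}) yields the compact forms
\begin{equation*}
    N_w=w(P+Q)+2dP,\qquad N_d=d(P+Q)+2wQ,\qquad D=P-Q,\qquad h^2w^2+d^2=Qw^2-Pd^2.
\end{equation*}
Substituting these into $QN_w^2-PN_d^2$, the cross terms proportional to $wd(P+Q)PQ$ cancel exactly, and what remains collects as $(Qw^2-Pd^2)\bigl[(P+Q)^2-4PQ\bigr]$. The elementary identity $(P+Q)^2-4PQ=(P-Q)^2=D^2$ then finishes the proof.

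I anticipate no conceptual difficulty; the only real risk is a bookkeeping slip while extracting the factored forms of $N_w$ and $N_d$, so I would verify those carefully before proceeding. For intuition, the invariance has a clean geometric content that the paper apparently develops later: $a$ is the semi-major axis of the unique confocal conic with foci at $(\pm 1,0)$ tangent to the ray through $p$ in direction $v$. From this viewpoint the proposition records the classical fact that specular reflection off the segment, as well as the outer billiard translation, preserves tangency to a confocal conic. That geometric argument offers an elegant alternative proof, but the direct algebraic route above is self-contained and very short.
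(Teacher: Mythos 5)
Your proof is correct; I checked the reduction and the key identity. Cross-multiplying and using $(h^2+d^2)-(h^2w^2+d^2)=h^2(1-w^2)$ does reduce the claim (after dividing by $h^2$, legitimate since $h>0$) to $(h^2+d^2)N_w^2+(1-w^2)N_d^2=(h^2w^2+d^2)D^2$, and your compact forms check out: with $P=w^2-1$, $Q=d^2+h^2$ one has $N_w=w(P+Q)+2dP$, $N_d=d(P+Q)+2wQ$, $D=P-Q$, $h^2w^2+d^2=Qw^2-Pd^2$, so that
\begin{equation*}
QN_w^2-PN_d^2=(Qw^2-Pd^2)(P+Q)^2+4PQ(d^2P-w^2Q)=(Qw^2-Pd^2)\bigl[(P+Q)^2-4PQ\bigr]=(Qw^2-Pd^2)D^2,
\end{equation*}
the cross terms $4dw(P+Q)PQ$ cancelling as you say. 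This is the same basic strategy as the paper's --- direct algebraic verification --- but organized quite differently: the paper substitutes $w',d'$ into $(a')^2$ and exhibits a common polynomial factor $p(w,h,d)$ in numerator and denominator, which it simply asserts (and which, strictly speaking, one should also check is nonzero; in fact $p=(P+Q+2dw)^2+4h^2>0$). Your version clears denominators first and then makes the cancellation structurally transparent via $(P+Q)^2-4PQ=(P-Q)^2$, so it is fully checkable by hand and avoids the division-by-$p$ issue. The closing remark that $a$ and $b$ are the semi-axes of the confocal conic with foci $\pm1$ tangent to the ray is consistent with what the paper develops in Propositions 3.1--3.2 and the $W$-orbit discussion, and would indeed give a conceptual second proof, but your algebraic argument stands on its own.
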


The proof involves substituting $w'$ and $d'$ into the equation for $a^2$ to get $(a')^2 = \frac{h^2(w')^2 + (d')^2}{h^2 + (d')^2}$. After using (\ref{d'}) and (\ref{w'}) to simplify, we obtain:

\begin{equation*}
\begin{aligned}
    (a')^2 &= \frac{(h^2w^2+d^2)(p(w, h, d))}{(h^2+d^2)(p(w, h, d))} \\
    &= \frac{h^2w^2 + d^2}{h^2+d^2}=a^2,
\end{aligned} 
\end{equation*}
\medskip 
where 
\begin{multline*}
p(w, h, d) = d^4+h^4+2d^2h^2+4d^3w+6d^2w^2+4dh^2w\\
+4dw^3+w^4+2h^2w^2-4dw-2w^2+1+2h^2-2d^2.
\end{multline*}
\qed

Related to this is an equivalent invariant
\begin{equation*}
    b^2 = \frac{h^2w^2+d^2}{1-w^2} = \frac{h^2a^2}{1-a^2}.
\end{equation*}

\subsection{Invariant Ellipses}\label{subsec23}

In our altered coordinate system, the invariants $a$ and $b$ are actually the semi-axes of an invariant ellipse in the $(w, h, d)$ coordinate system.

\begin{proposition}\label{propE}
    Let $w, h, d \in \mathbb{R}$. Recalling the definitions $a^2 = \frac{h^2w^2+d^2}{h^2+d^2}$ and $b^2 = \frac{h^2w^2+d^2}{1-w^2}$, we have $\frac{w^2}{a^2} + \frac{d^2}{b^2} = 1$ (when $a^2, b^2 \ne 0$). 
\end{proposition}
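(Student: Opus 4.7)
The plan is to verify the identity by direct substitution, exploiting the fact that $a^2$ and $b^2$ share the common numerator $N := h^2w^2+d^2$. Once this common factor is recognized, the identity reduces to showing that the two denominators $h^2+d^2$ and $1-w^2$ combine in a way that cancels $N$.

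Concretely, I would first record
\begin{equation*}
\frac{w^2}{a^2} \;=\; \frac{w^2(h^2+d^2)}{h^2w^2+d^2}, \qquad \frac{d^2}{b^2} \;=\; \frac{d^2(1-w^2)}{h^2w^2+d^2},
\end{equation*}
using the hypothesis $a^2,b^2\neq 0$ to justify inversion. Adding these two fractions over the common denominator $N$, the numerator becomes
\begin{equation*}
w^2(h^2+d^2) + d^2(1-w^2) \;=\; h^2w^2 + w^2d^2 + d^2 - w^2d^2 \;=\; h^2w^2+d^2,
\end{equation*}
where the cross terms $\pm w^2 d^2$ cancel. Thus the sum equals $N/N = 1$, as desired.

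There is essentially no obstacle here; the only thing worth flagging is the degeneracy. The hypothesis $a^2,b^2\neq 0$ rules out $h^2w^2+d^2 = 0$ (which would also force $w=d=0$ whenever $h\neq 0$), so the division step is valid. One could optionally remark that the same computation shows the identity is in fact an algebraic one valid for all real $(w,h,d)$ with $h^2w^2+d^2\neq 0$, and that the geometric interpretation of $a$ and $b$ as the semi-axes of the ellipse swept out in the $(w,d)$-plane (with $h$ fixed) follows immediately from the standard form $\tfrac{w^2}{a^2}+\tfrac{d^2}{b^2}=1$.
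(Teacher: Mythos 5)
Your computation is correct and is essentially identical to the paper's own proof: both substitute the definitions of $a^2$ and $b^2$, combine over the common denominator $h^2w^2+d^2$, and cancel the $\pm w^2d^2$ cross terms. The extra remark on the nondegeneracy hypothesis is a fine, if optional, addition.
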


\begin{proof}
\begin{equation*}
\begin{aligned}
    \cfrac{w^2}{a^2} + \cfrac{d^2}{b^2} &= \cfrac{w^2}{\cfrac{h^2w^2 + d^2}{h^2+d^2}} + \cfrac{d^2}{\cfrac{h^2w^2+d^2}{1-w^2}} \\
    &= \cfrac{h^2w^2 + d^2w^2 + d^2 - d^2w^2}{h^2w^2 + d^2} \\
    &= 1
\end{aligned}
\end{equation*}
\end{proof}

\begin{figure}[hbt!]
    \centering
    \includegraphics[width=0.6\linewidth]{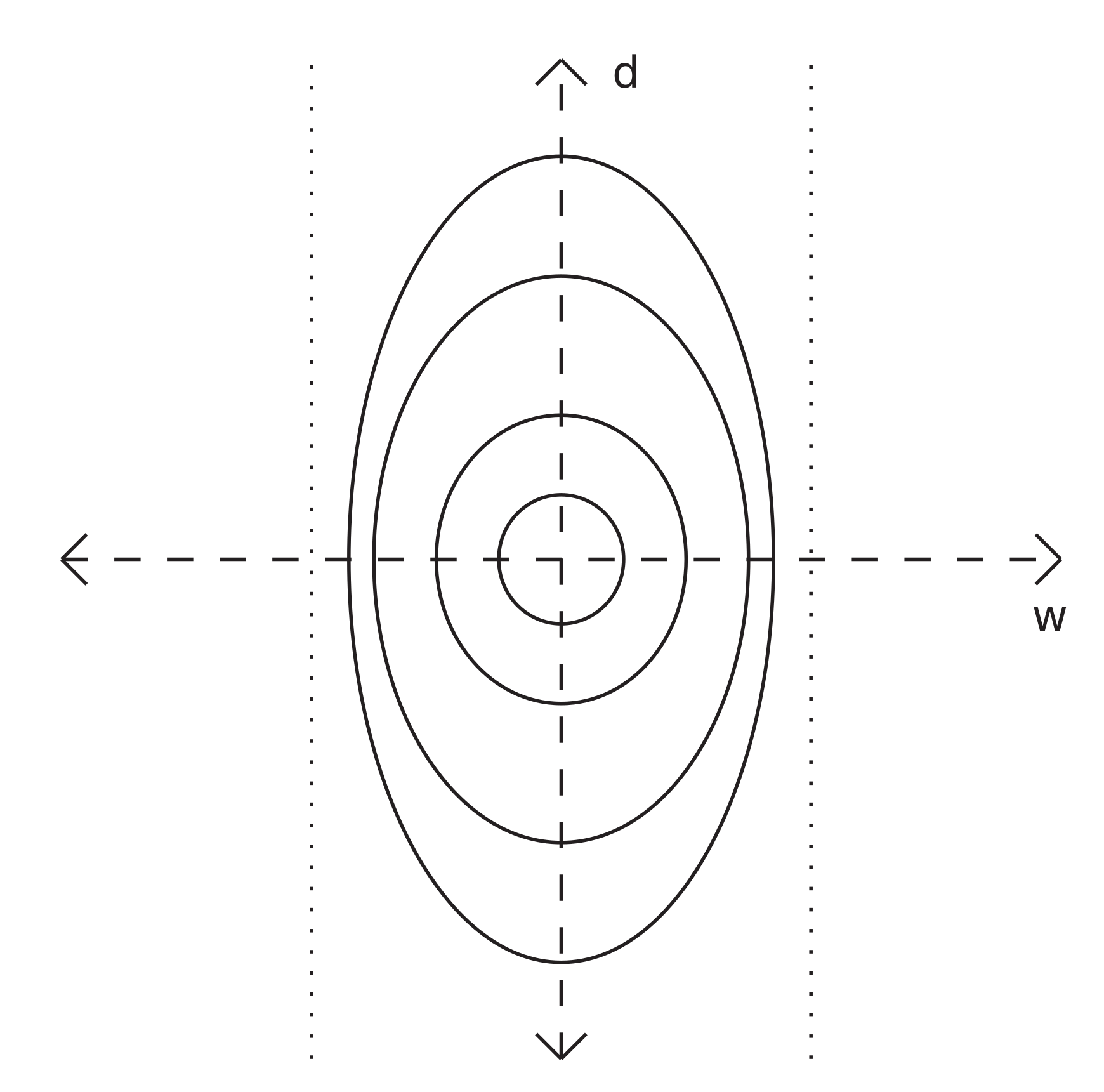}
    \caption{Several Invariant Ellipses with Height One}
    \label{fig:ellipses}
\end{figure}

By Proposition~\ref{propA}, we have that $a^2 = (a')^2$, and by the relationship between $a$ and $b$ we have that $b^2 = (b')^2$. Along with Proposition~\ref{propE}, we get

\begin{equation*}
    \frac{(w')^2}{a^2} + \frac{(d')^2}{b^2} = 1,
\end{equation*}
thus showing that any orbit belongs to an ellipse in the $(w, d)$-coordinate system.

Note that if $a$ or $b$ are equal to zero, then the other must be as well by the equation relating them. If they are both zero, we have that $w=d=0$ for all points in the orbit. Using (\ref{inverse_transform_1}), this implies that $x=\theta=0$ for all points in the orbit, which means such initial conditions correspond to fixed points.
 
\subsection{Twist Dynamics}\label{subsec24}

For this section, we will fix a height $h$ and an invariant ellipse, thereby fixing invariants $a$ and $b$, which are defined to be the positive square roots of $a^2$ and $b^2$ respectively. We can parameterize the ellipse with $r(\theta) = (w, d) = (a  \cos(\theta), b  \sin(\theta))$. We now define the function $\overline{f} : S^1 \rightarrow S^1$ as $\overline{f} = r^{-1} \circ f \circ r$, which allows us to view the restriction of $f$ to our invariant ellipse as a circle diffeomorphism.

\begin{theorem}\label{thmP}
    There exists some $\varphi \in S^1$ such that $\overline{f}(\theta) = \theta + \varphi$, where $\varphi = \varphi(a)$ is a strictly increasing function, $\varphi'(a) > 0$ given by:
    \begin{equation*}
    \varphi(a) = \begin{cases}
        \arctan(\frac{2ab}{b^2-a^2}) + \pi & a < b \\
        \arctan(\frac{2ab}{b^2-a^2}) & a > b \\
        \frac{3\pi}{2} & a=b        
    \end{cases}
\end{equation*}
with
\begin{equation*}
    \varphi'(a) = \cfrac{2b}{b^2+a^2}.
\end{equation*}
\end{theorem}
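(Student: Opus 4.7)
The plan is to substitute the ellipse parameterization $w=a\cos\theta$, $d=b\sin\theta$ directly into the rational expressions~(\ref{w'}) and~(\ref{d'}) for $w'$ and $d'$, and verify by algebraic simplification that the result is $(a\cos(\theta+\varphi),\,b\sin(\theta+\varphi))$ for the claimed $\varphi=\varphi(a)$. Everything hinges on the defining relation $h^2=b^2(1-a^2)/a^2$ coming from the formula $b^2=h^2a^2/(1-a^2)$ stated at the end of Section~\ref{subsec22}.

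First I would compute the common denominator $D=w^2-d^2-h^2-1$ under the parameterization: expanding $a^2\cos^2\theta-b^2\sin^2\theta-h^2-1$ and substituting for $h^2$ produces the clean factorization
\begin{equation*}
D=-\frac{(a^2+b^2)(1-a^2\cos^2\theta)}{a^2}.
\end{equation*}
Next I would regroup the numerators of~(\ref{w'}) and~(\ref{d'}) as
\begin{equation*}
N_{w}=w(w^2+d^2+h^2-1)+2d(w^2-1),\qquad N_{d}=d(w^2+d^2+h^2-1)+2w(h^2+d^2).
\end{equation*}
Under the parameterization one checks the companion identities $w^2+d^2+h^2-1=-(a^2-b^2)(1-a^2\cos^2\theta)/a^2$ and $h^2+d^2=b^2(1-a^2\cos^2\theta)/a^2$, after which the common factor $1-a^2\cos^2\theta$ cancels against $D$. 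This leaves the elegant expressions
\begin{equation*}
w'=\frac{a\bigl[(a^2-b^2)\cos\theta+2ab\sin\theta\bigr]}{a^2+b^2},\qquad d'=\frac{b\bigl[(a^2-b^2)\sin\theta-2ab\cos\theta\bigr]}{a^2+b^2},
\end{equation*}
which are visibly $a\cos(\theta+\varphi)$ and $b\sin(\theta+\varphi)$ with $\cos\varphi=(a^2-b^2)/(a^2+b^2)$ and $\sin\varphi=-2ab/(a^2+b^2)$. The case split in the stated formula only places $\varphi$ in the correct quadrant of $S^1$ according to the signs of $a^2-b^2$ and $-2ab$ (note that $\sin\varphi<0$ always).

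The derivative formula then follows by implicit differentiation of $\tan\varphi=2ab/(b^2-a^2)$: one computes $\sec^2\varphi=1+\tan^2\varphi=(a^2+b^2)^2/(b^2-a^2)^2$ and the $a$-derivative of the right-hand side (treating $b$ as the second independent invariant of the ellipse), so that the factors of $(b^2-a^2)^2$ and $(a^2+b^2)$ telescope to leave $\varphi'(a)=2b/(a^2+b^2)$, which is manifestly positive.

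The main obstacle I expect is the bookkeeping in the middle step: the numerators are quartic in $(w,d,h)$, and the cancellation of the factor $1-a^2\cos^2\theta$ is somewhat delicate. The correct regrouping of $N_w$ and $N_d$ indicated above is what makes the algebra collapse; without it one faces a messy quartic trigonometric identity. The fact that $\cos\varphi$ and $\sin\varphi$ emerge independent of $\theta$ is the content of the theorem, and in hindsight it reflects the rationality of both the map and the ellipse parameterization.
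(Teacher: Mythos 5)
Your proposal is correct, and it takes a genuinely different route from the paper's Appendix~B. The paper works in the angular coordinate from the start: it writes $\overline{f}(\theta)-\theta$ as a difference of arctangents, uses the arctangent subtraction formula to show that $\frac{x-y}{1+xy}=\frac{2ab}{b^2-a^2}$ is constant along the ellipse, and then spends most of the argument on branch bookkeeping --- a case analysis of the signs of $1+xy$ and of $w'$ over various $w$-intervals, split by $d>0$ versus $d<0$ and $a>b$ versus $a<b$ --- in order to upgrade ``constant mod $\pi$'' to ``constant mod $2\pi$.'' You instead substitute $w=a\cos\theta$, $d=b\sin\theta$ directly into~(\ref{w'}) and~(\ref{d'}); I checked your intermediate identities ($D=-(a^2+b^2)(1-a^2\cos^2\theta)/a^2$, $w^2+d^2+h^2-1=-(a^2-b^2)(1-a^2\cos^2\theta)/a^2$, $h^2+d^2=b^2(1-a^2\cos^2\theta)/a^2$, and the regroupings of $N_w$, $N_d$) and they are all correct, with the cancelling factor $1-a^2\cos^2\theta=1-w^2$ never vanishing since $a^2<1$. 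Your closed forms for $w'$ and $d'$ then exhibit $\cos\varphi=(a^2-b^2)/(a^2+b^2)$ and $\sin\varphi=-2ab/(a^2+b^2)$ explicitly, which pins down the quadrant of $\varphi$ (hence the case split, and the value $3\pi/2$ at $a=b$) in one line and entirely eliminates the paper's sign-tracking of the $m\pi$ and $\pi n(w)$ terms. This is a real simplification of the hardest part of the paper's proof. The derivative computation at the end is the same partial derivative in $a$ with $b$ held fixed that the paper computes, so you match the stated formula; the only caveat, shared with the paper, is that on a fixed-height slice $b$ is itself a function of $a$, so strictly speaking the monotonicity of the rotation number along a height level requires the total derivative rather than the partial one.
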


The proof of this theorem is computational and will be included in the Appendix~B. 

\subsection{Periodic Orbits for the Billiard on the Segment}\label{subsec25}

Clearly the middle perpendicular, which is the $y$-axis gives a 1-parameter family of fixed point (these correspond to degenerate ellipses with $a=b=0$). Points of higher period are due to rational rotation numbers and come in 2-parameter families as one can vary the height as well. 

\begin{figure}[H]
    \centering
    \includegraphics[width=0.45\linewidth]{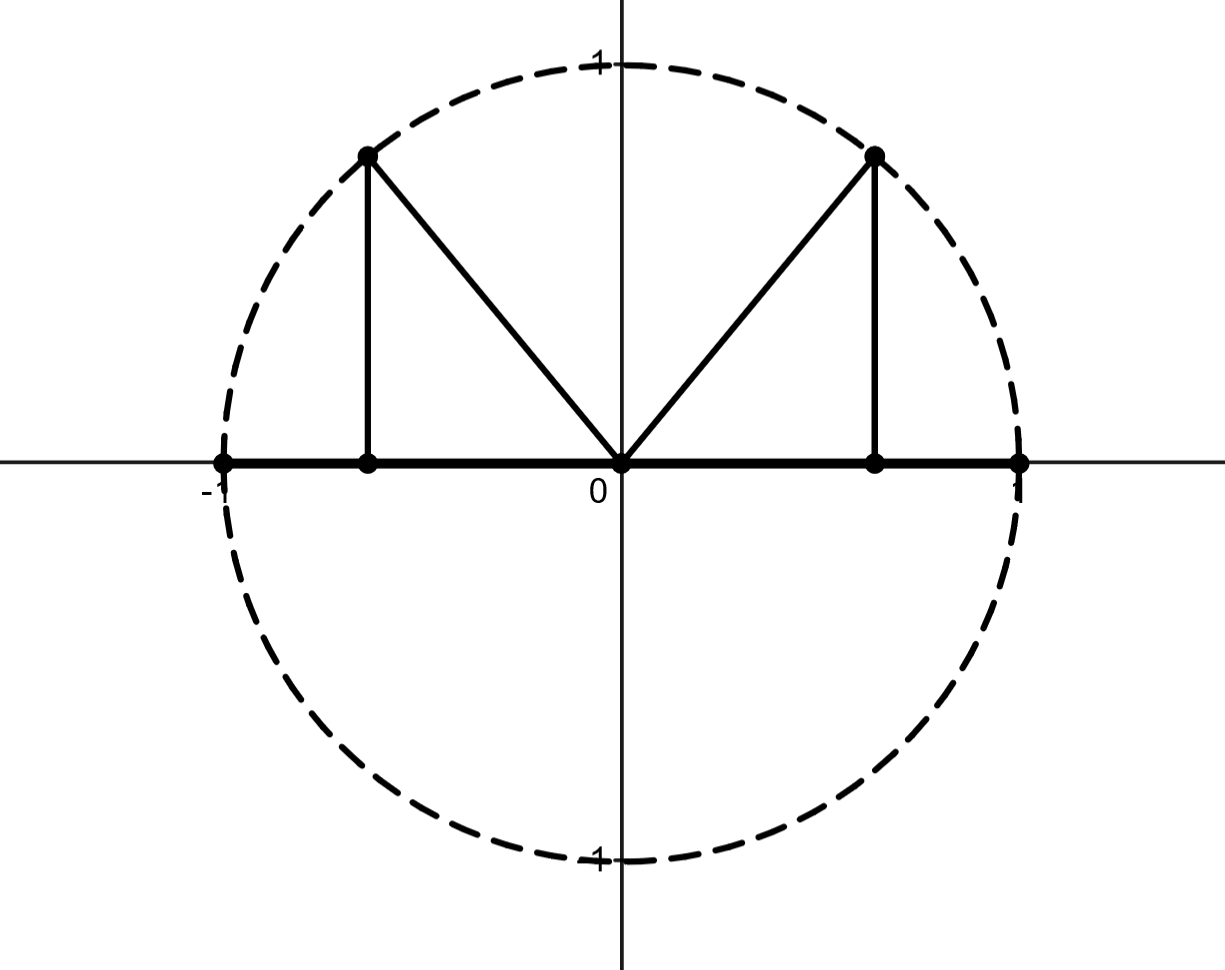}\hfill
    \includegraphics[width=0.45\linewidth]{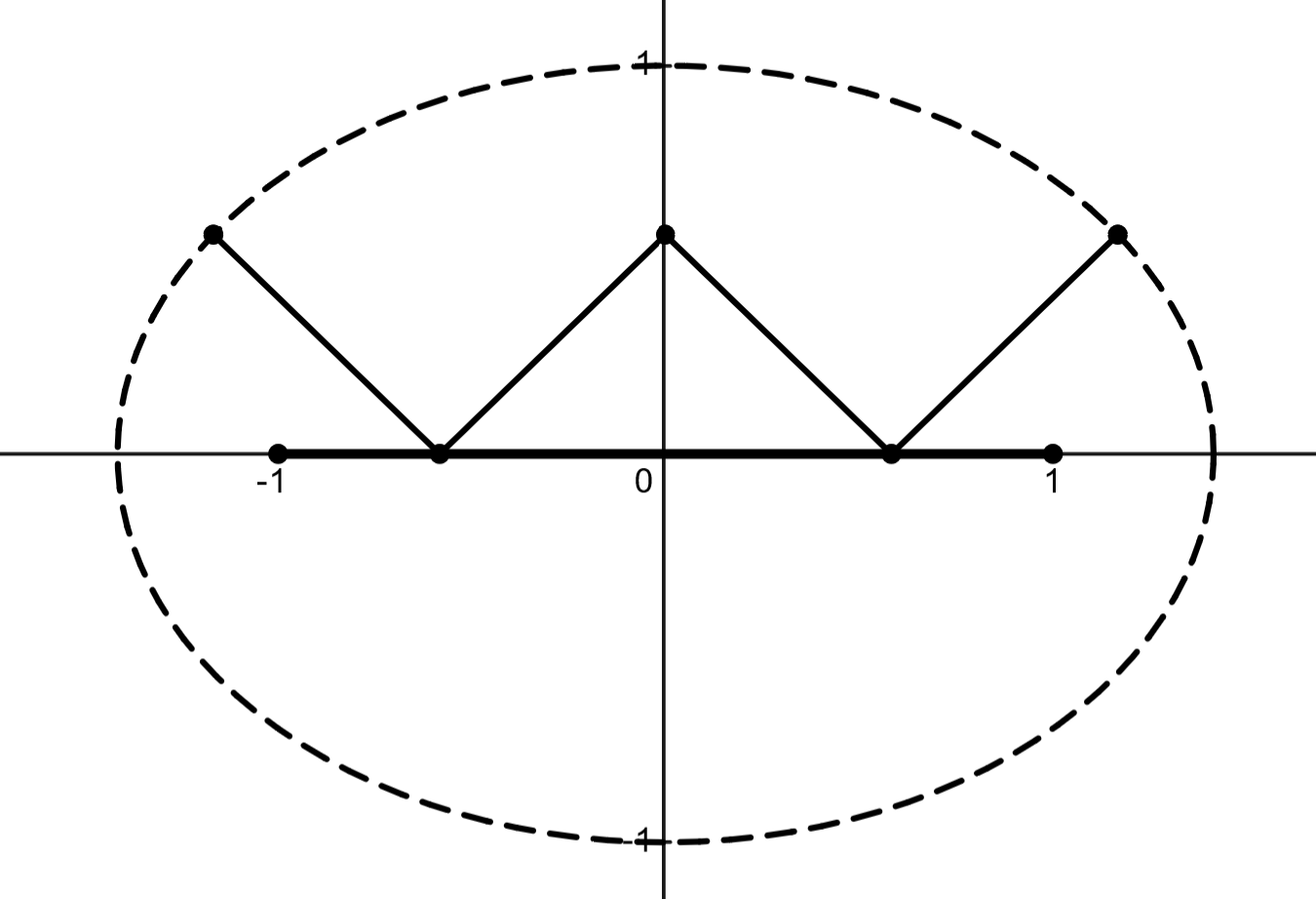}
    \caption{``M" and ``W" period 4 orbits}
    \label{fig:MsAndWs}
\end{figure}

We point out two aesthetically pleasing sub-families of period 4 orbits on Figure~\ref{fig:MsAndWs}. The $M$-orbits fill out a semi-circle and the $W$-orbits fill out a semi-ellipse. For the ``W" case on the right, the horizontal semi-axis is $\sqrt{2}$ and the foci of this ellipse are the $\pm 1$ endpoints of the segment.




It is easy to calculate from the formula for the rotation number in Theorem~1.3 that for a given height $h$ the interval of possible rotation numbers $\varphi$ has the form $(\pi, \rho(h))$, where $\rho$ is an explicit decreasing function, $\rho(h)\to 0$, $h\to\infty$; $\rho(h)\to2\pi$, $h\to 0$. In particular, $(\pi, \rho(h))\subset (\pi, 2\pi)$ and, hence, there are no orbits of period 2. Clearly, for all sufficiently small heights orbits of all periods $\ge 3$ are present. As height increases smaller period orbits begin to disappear. For example orbits of period 4 with rotation number $\frac{3\pi}{2}$ disappear at $h=1$ and orbits of period 3 with rotation number $\frac{4\pi}{3}$ disappear at $h\simeq 1.8$.

\begin{figure}[hbt!]
    \centering
    \includegraphics[width=1\linewidth]{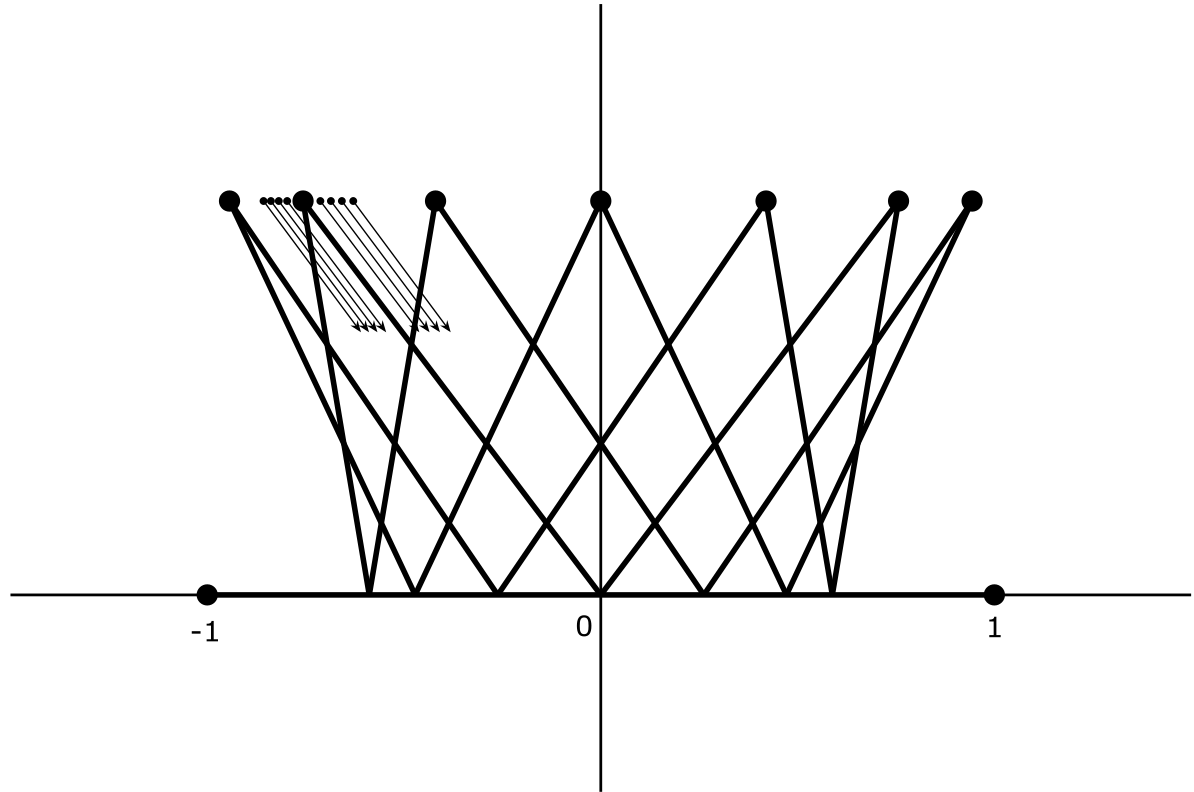}
    \caption{Family of Orbits of Period 7}
    \label{fig:least-period-seven}
\end{figure}

The explicit formula of Theorem~1.3 allows to explicitly calculate periodic orbits.
For example, if one wished to find  periodic orbits of least period 7, one can calculate parameter values which correspond to the rotation number $\frac{10\pi}{7}$. 

Figure~5 depicts a family of period seven orbits of height one.
Note the depicted orbit is symmetric about the line $x = 0$, which unfolds into the family of asymmetric period seven orbits as indicated on the figure.

\vspace{10pt}

\section{Numerical Simulations}
\subsection{Bouncing on Parabola Arcs}\label{subsecParabola}

After fully understanding the dynamics on the segment, we can perturb the dynamics and examine how integrability is being destroyed. Probably the simplest ways is to consider the unfolding of the segment into a piece of a downward-facing parabola given by $f(x) = -ax^2 + a \hspace{0.2 cm} \{-1 \le x \le 1\}$. We will make a slight modification to our visibility domain to make sure that bouncing billiard still makes sense.

Recall that according to our definition we required that for $(p,v)$ in the visibility domain, the ray spanned by $v$ has a nonempty intersection with the boundary of the set. For the parabola, we will require that the ray has a nonempty intersection with the parabola, but will impose the additional requirement that the segment $\overline{pw}$ lies entirely above the parabola given by $-ax^2 + a$, where $w$ is the closest intersection point to $p$ of the ray and parabola. In other words, a point $p$ is not be able to ``see" the underside of the parabola. The dynamics rule remains the same, simply utilizing the newly defined visibility domain for the visibility angle reflection.

\begin{remark}
Despite the fact that our integrable model is a perfect twist map, KAM theory doesn't apply directly since we are in a 3-dimensional situation. Still as we see below KAM features such as elliptic islands seem to be present in our unfolding.
\end{remark}

Figure~\ref{fig:parabola-height-three-tenths} depicts some orbits on a parabola of height $\frac{3}{10}$. We observe that most of the orbits which begin close to the parabola fill up invariant circles which align themselves along the parabola such as the green orbit in the figure. Others, such as the red and blue fill up periodic curves. Finally, some orbits, such as the purple one exhibit more complicated behavior similar to  Aubry-Mather sets with positive Lyapunov exponent.

\begin{figure}[hbt!]
    \centering
    \includegraphics[width=0.9\linewidth]{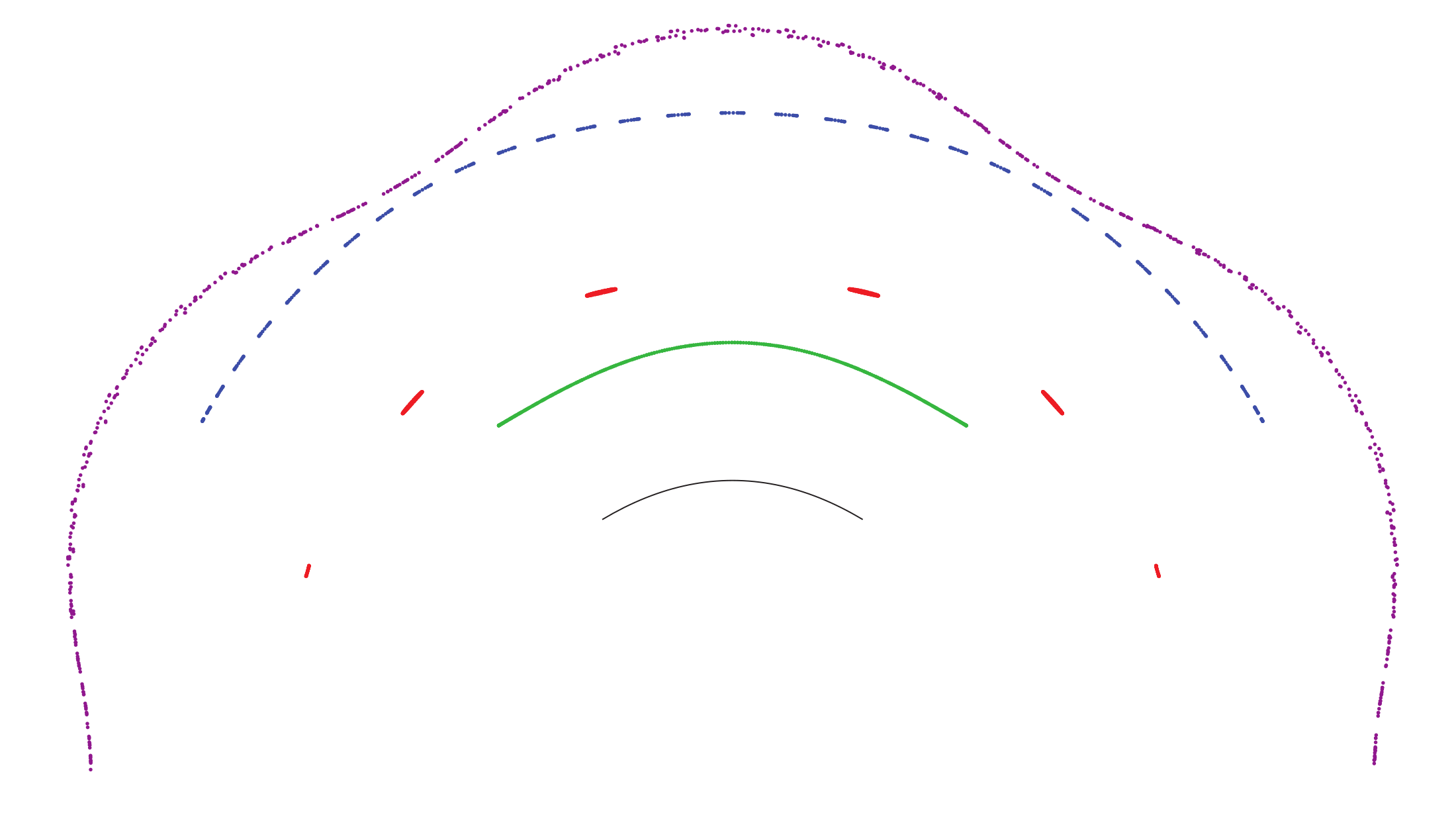}
    \caption{Orbits on Parabola of Height $\frac{3}{10}$}
    \label{fig:parabola-height-three-tenths}
\end{figure}

\begin{figure}[hbt!]
    \centering
    \includegraphics[width=0.9\linewidth]{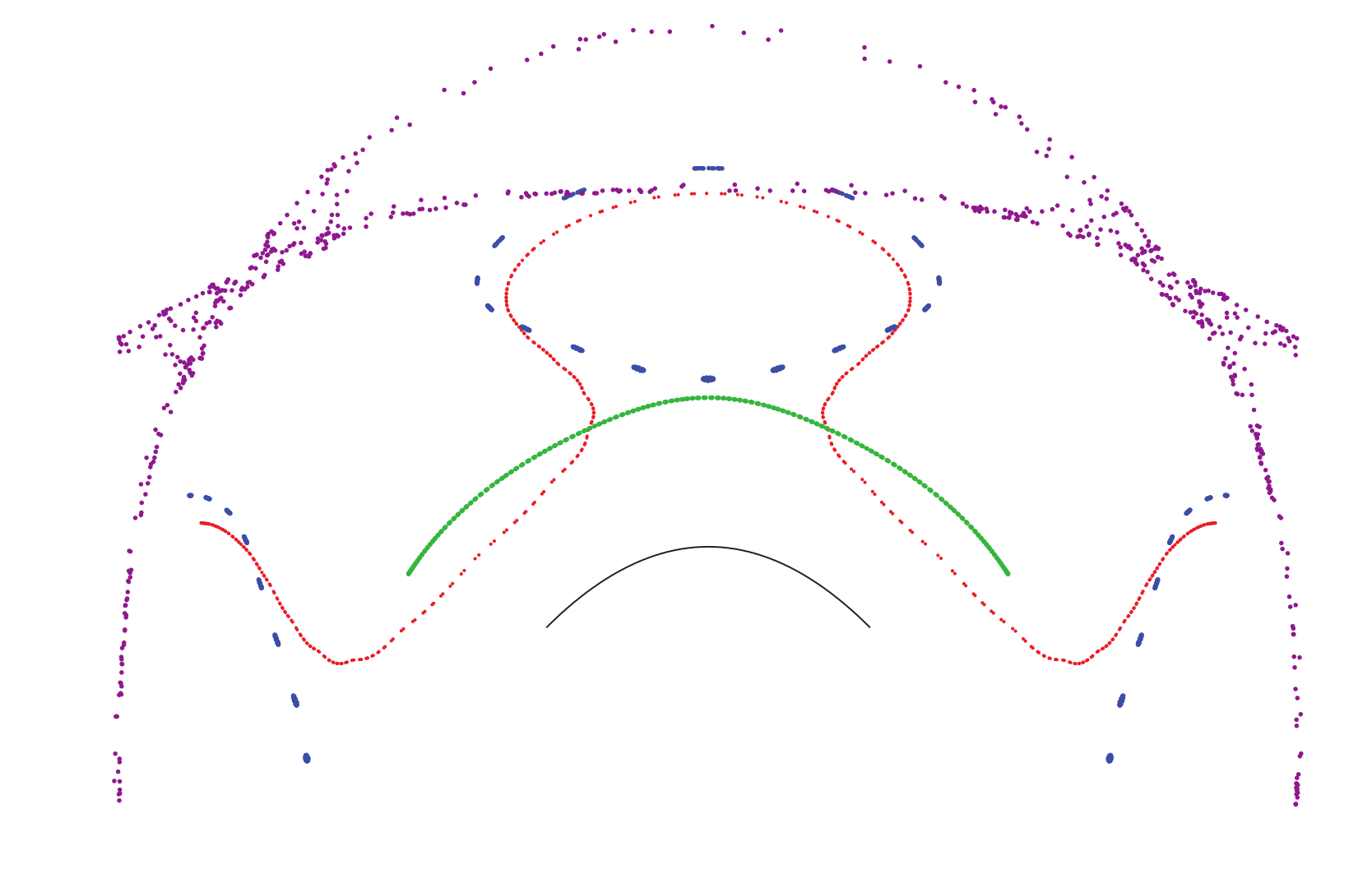}
    \caption{Orbits on Parabola of Height $\frac12$}
    \label{fig:parabola-height-one-half}
\end{figure}

\begin{figure}[hbt!]
    \centering
    \includegraphics[width=0.9\linewidth]{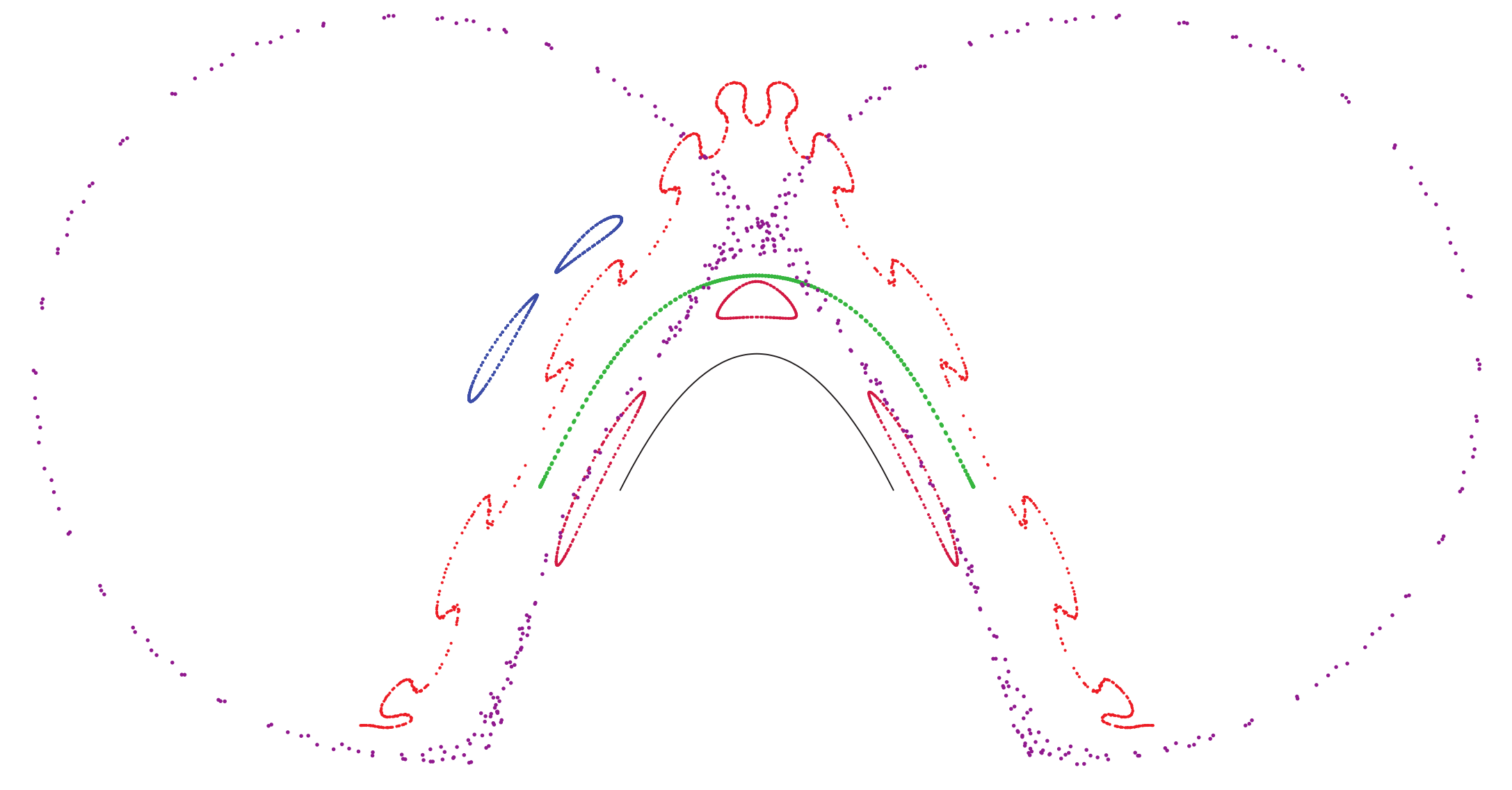}
    \caption{Orbits on Parabola of Height 1}
    \label{fig:parabola-height-one}
\end{figure}

As we increase the height, the observed behaviors become more complicated. Figure~\ref{fig:parabola-height-one-half} and Figure~\ref{fig:parabola-height-one} depict orbits on parabolas of heights $\frac{1}{2}$ and 1, respectively. On these more extreme parabolas, we still observe periodic curves which take more complicated shapes, including non-symmetric such the blue orbit on Figure~\ref{fig:parabola-height-one} . 
Additionally, with increased height we more easily detect chaotic behavior such as the dark purple orbits of both figures and the red orbit of Figure~\ref{fig:parabola-height-one}.

\subsection{Bouncing on the Square}\label{subsecSquare}

It is also interesting to investigate bouncing billiards on polygons. For the sake of simplicity, we will focus solely on the system on the square. On the square, we classify observed orbits into five categories.

\begin{figure}[hbt!]
    \centering
    \includegraphics[width=0.67\linewidth]{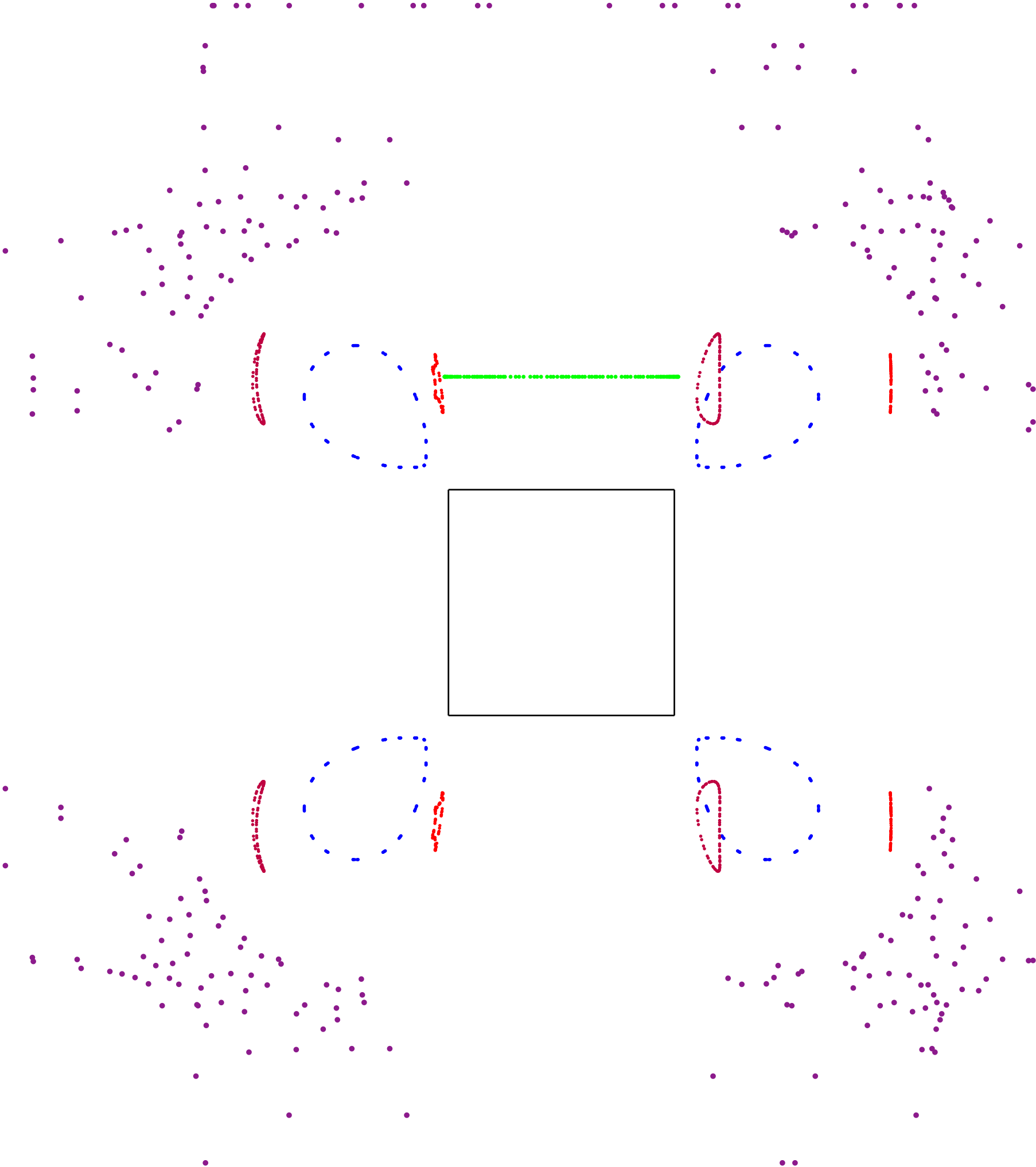}
    \caption{Orbits on a Square}
    \label{fig:square}
\end{figure}

The first, such as the green orbit on Figure~\ref{fig:square} consists points staying a fixed perpendicular distance away from one side of the square. For some such orbits, the orbit never extend in the direction parallel to that side further than the endpoints of the side. In this case, the system is identical to that on the line segment. In other cases, such orbit can extend past the corners of the square while still remaining on one side; in this case the orbit is not the same as an orbit on the segment. 

The second kind are those orbits which fill up four closed curves, with one near each corner of the square. This can be seen in the magenta orbit of Figure~\ref{fig:square}. Most of these orbits observed appeared to be rotationally symmetrical, but the one pictured is not.  

The third kind involves what appears to be a invariant circle near each corner, but actually consists of many smaller closed curves making up the apparent larger circle. This is depicted in the blue orbit of the figure. 

The fourth kind is another chaotic variety. It involves a period 4 non-smooth set, possibly a Cantor set. This kind is depicted in the red orbit of the figure.

The final class of orbits occupy all sides of the square and seem to behave chaotically such as the dark purple orbit. Such orbits appear to fill in a positive area domains. However, numerics becomes very tricky for such orbits as we clearly detected positive Lyapunov exponent for such orbits. 

\begin{figure}[H]
    \centering
    \includegraphics[width=0.5\linewidth]{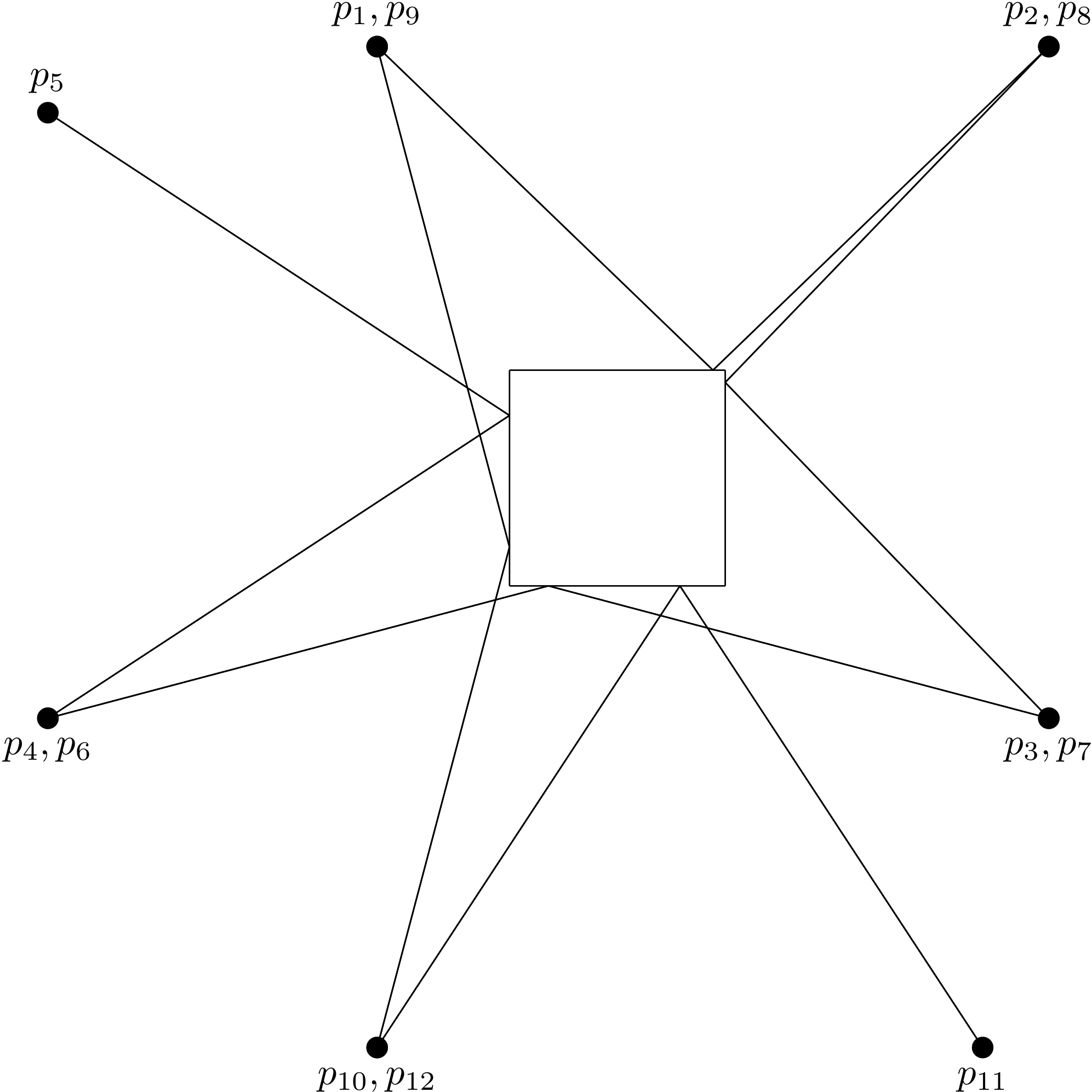}
    \caption{Period Twelve Orbit on Square with Large Eigenvalue}
    \label{fig:eigenvalues}
\end{figure}

Another finding on the square is the existence of periodic points whose Jacobian matrix has eigenvalues greater than one. Figure~\ref{fig:eigenvalues} shows one such example. It depicts a period twelve orbit whose Jacobian matrix has eigenvalues approximately 0.086, 1, and 11.592.

\begin{remark}
    It is easy to see from the form of the differential of the bouncing outer billiard on a convex polygon that every periodic point of such billiard has at least one eigenvalue equal to 1.
\end{remark}

\subsection{Bouncing on an Ellipse}\label{subsecEllipse}

While we fully understand the segment and the circle, in between fall ellipses, which also show very complex behavior. We consider bouncing outer billiard on the ellipse with major and minor semi-axis equal to 1 and 0.4, respectively. As expected, we can have orbits which are similar to the segment and circle, shown in Figure~\ref{fig:SegmentEllipse} and Figure~\ref{fig:CircleEllipse}, respectively. 

\begin{figure}[H]
    \centering
    \includegraphics[width=0.9\linewidth]{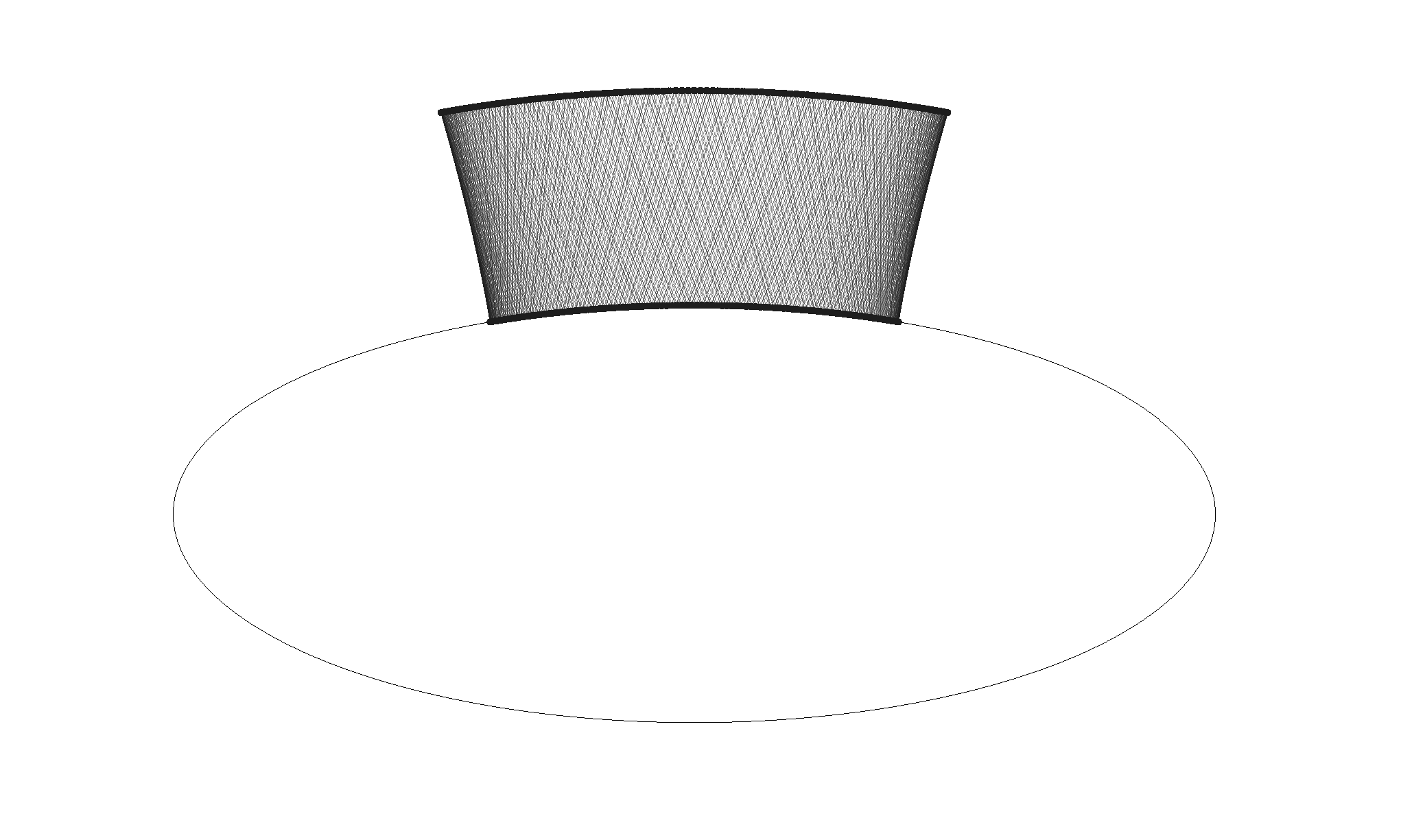}
    \caption{Segment-like  behavior}
    \label{fig:SegmentEllipse}
\end{figure}

\begin{figure}[H]
    \centering
    \includegraphics[width=0.9\linewidth]{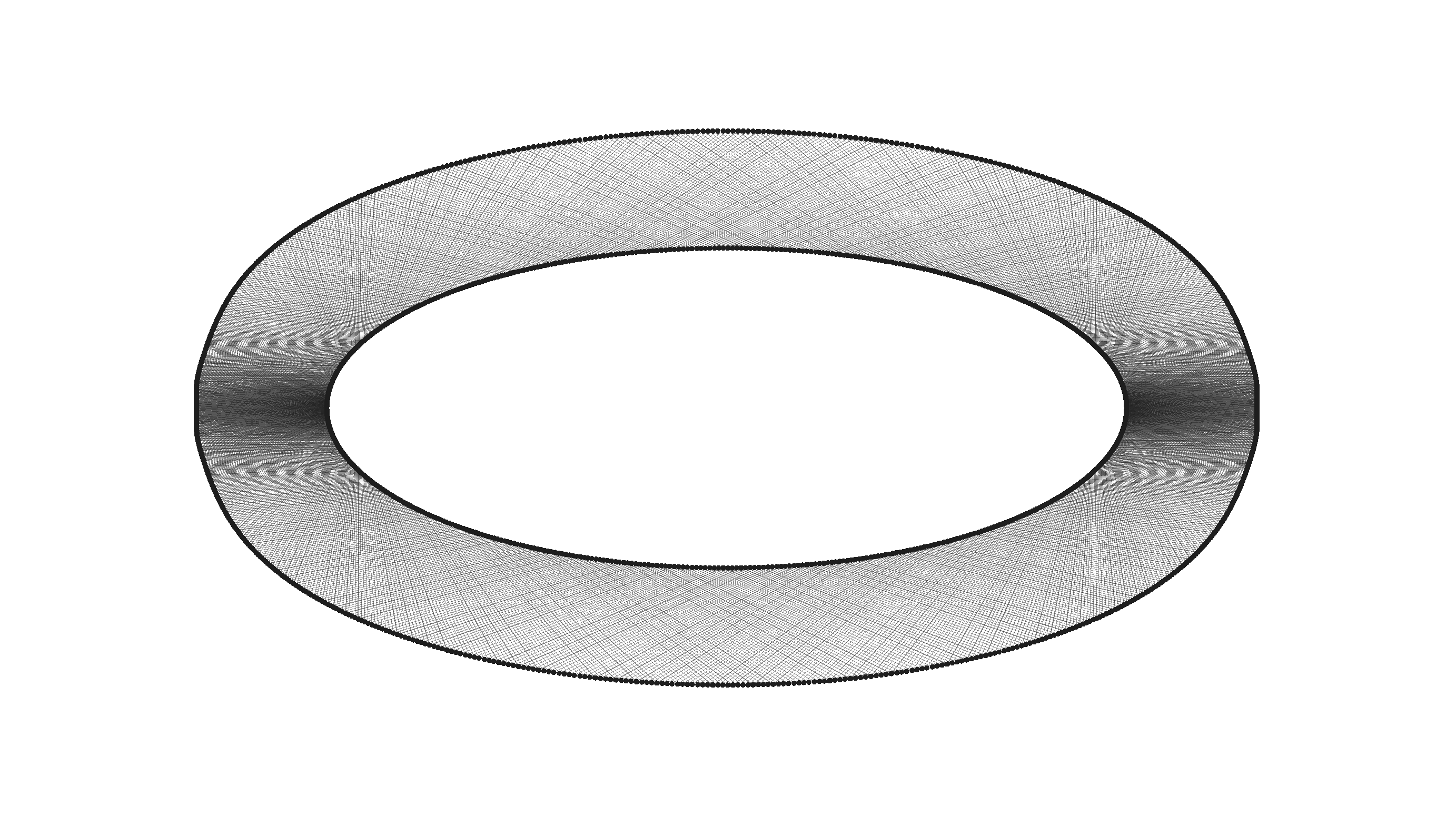}
    \caption{Circle-like behavior}
    \label{fig:CircleEllipse}
\end{figure}

We also have cases where the orbit closure is a periodic circle such as those on Figure~\ref{fig:FourPieceEllipse} and Figure~\ref{fig:ManyEllipse}.

\begin{figure}[H]
    \centering
    \includegraphics[width=0.9\linewidth]{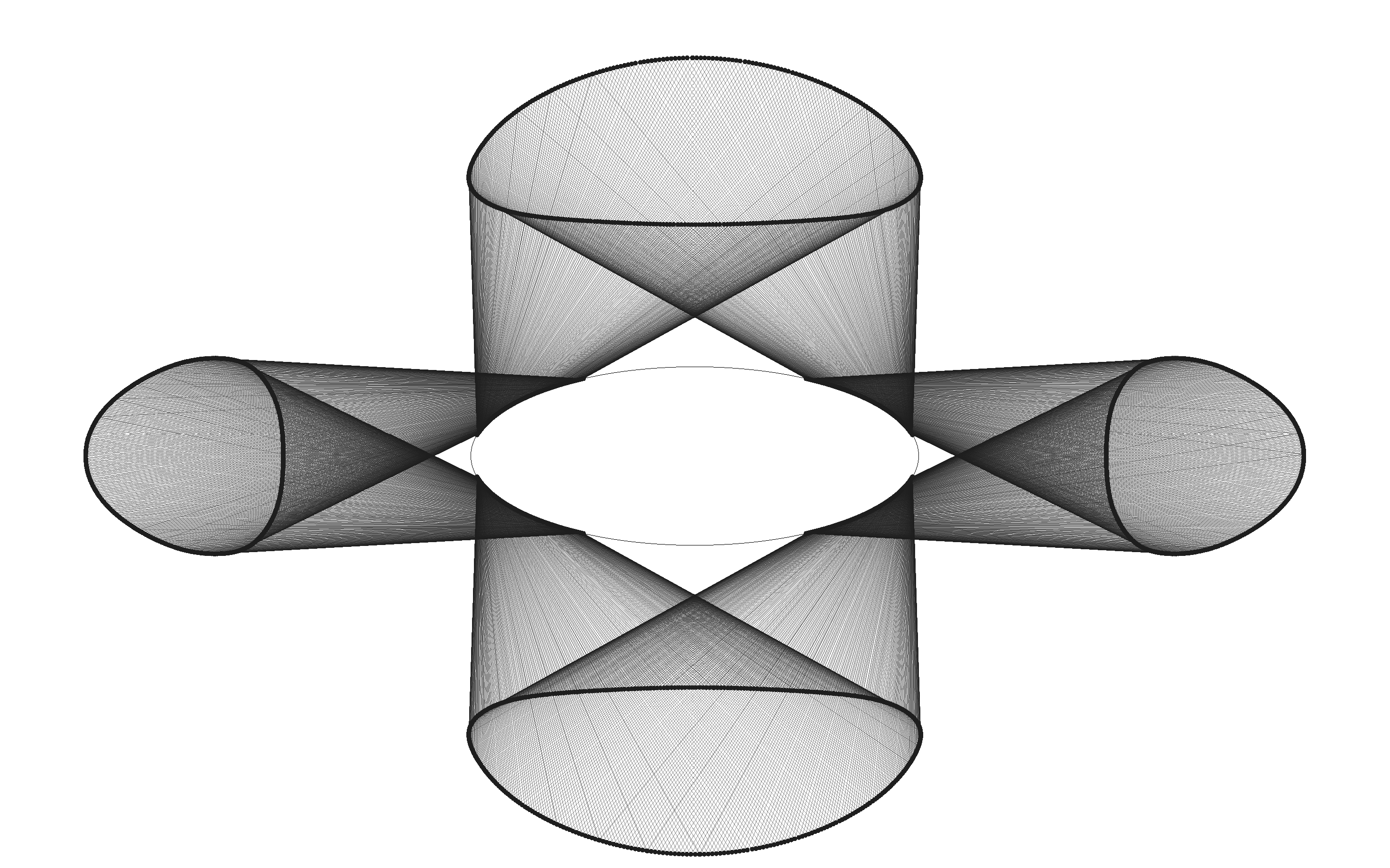}
    \caption{Four closed invariant curves}
    \label{fig:FourPieceEllipse}
\end{figure}

\begin{figure}[H]
    \centering
    \includegraphics[width=0.9\linewidth]{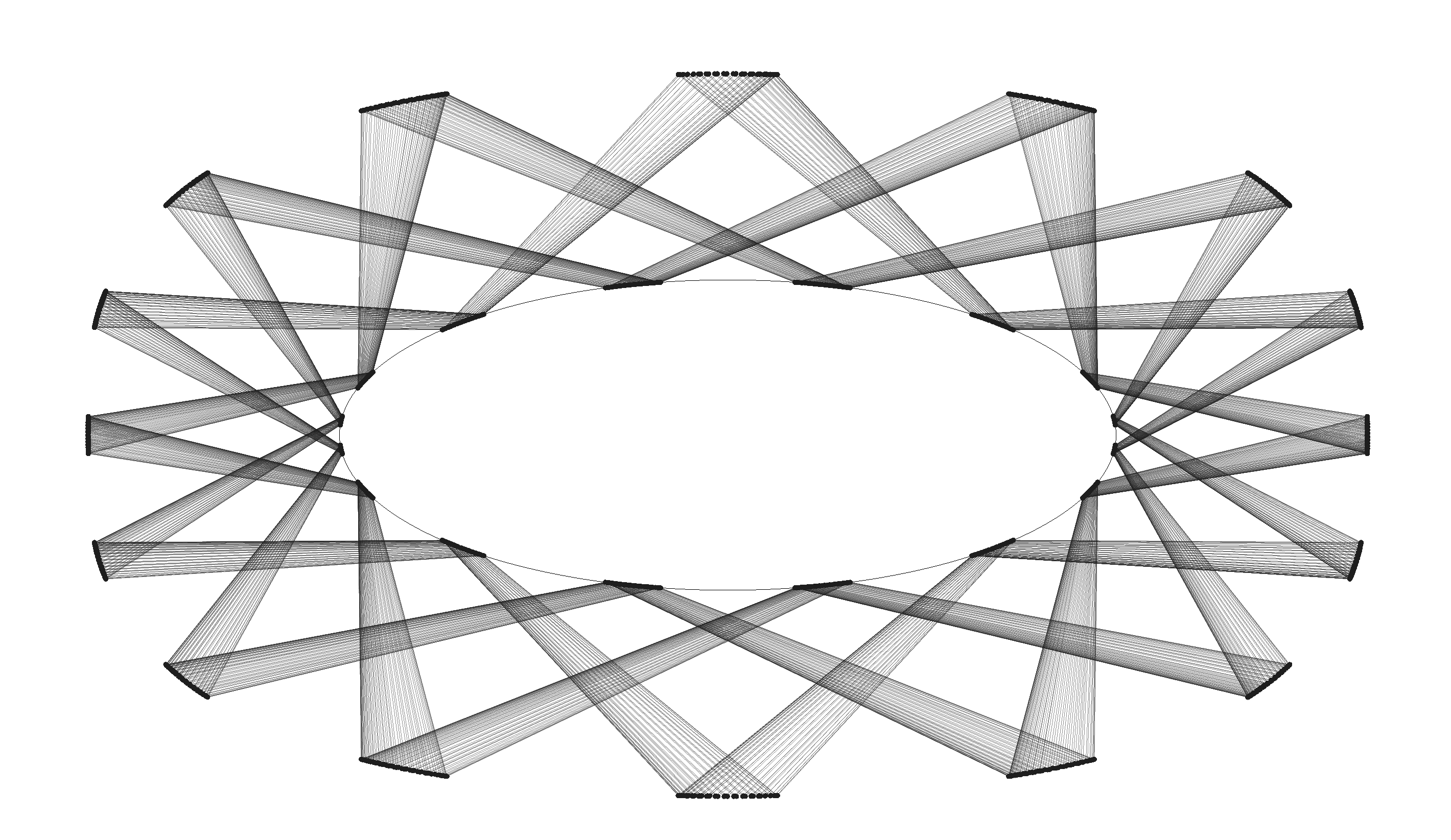}
    \caption{Many closed invariant curves}
    \label{fig:ManyEllipse}
\end{figure}

Finally, ``in between" the circle-like behavior and four closed curve behavior, we detect an orbit which appears to  fill up positive area domain, as shown in Figure~\ref{fig:PositiveAreaEllipse}. 

We notice that the types of orbits we observe for the parabola arc and the ellipse are the same.

\begin{figure}[H]
    \centering
    \includegraphics[width=0.9\linewidth]{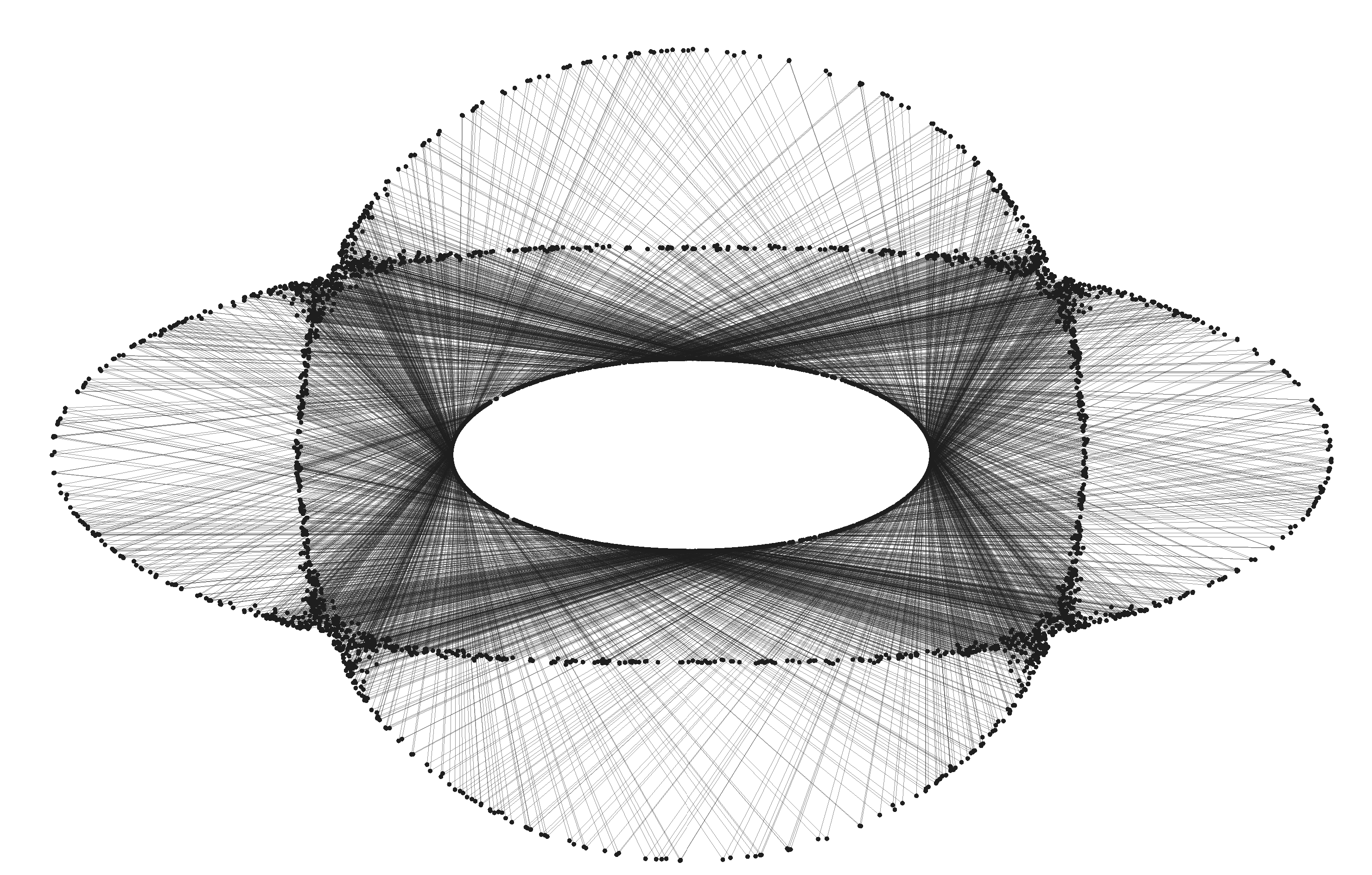}
    \caption{A chaotic orbit}
    \label{fig:PositiveAreaEllipse}
\end{figure}

\section*{Appendix A: The Conservative Property}

Here we verify that bouncing outer billiard dynamics $F$ preserves the Lebesgue measure. Let $dA$ be the standard 2-dimensional Lebesgue measure restricted to $\mathbb R^2$ and let $d\theta$ be the Lebesgue on the circle.

\begin{proposition}
    Assume that the boundary of $S$ is a $C^2$ curve then the restriction of $dA\otimes d\theta$ to $V_S$ is an infinite measure which is invariant under $F$.
\end{proposition}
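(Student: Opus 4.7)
The plan is to decompose $F$ as a composition $F = R \circ U$ and verify that each factor preserves $dA \otimes d\theta$. Here $U(p,v) := (p', u)$ implements the first two steps of the dynamics (inner reflection off $\partial S$ at $w$, then the outer-billiard point reflection sending $p$ to $p'$), with $u$ the unit vector at $p'$ pointing back to $w$; and $R(p', u) := (p', v')$ is the visibility angle reflection at $p'$.

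Invariance under $R$ is immediate: $R$ fixes the base-point factor, and on each fiber $\{p'\} \times S^1$ it acts as a reflection of the circle, which is an isometry of $d\theta$. Hence $R$ preserves $dA \otimes d\theta$ fiberwise.

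The core of the argument is invariance under $U$. I will introduce adapted tubular coordinates $(s,t,\phi)$ on $V_S$, where $s$ is the arc-length parameter of the hit point $w = f(s) \in \partial S$, $t = \|p - w\|$, and $\phi$ is the signed angle between $v$ and the inward unit normal $n(s)$. Writing $p(s,t,\phi) = f(s) - t(\cos\phi\, n(s) + \sin\phi\, \tau(s))$, and using the Frenet identities $\tau' = k n$ and $n' = -k\tau$ (which is where $C^2$ regularity of $\partial S$ enters) together with the relation $\theta = \alpha(s) + \pi/2 - \phi$ between the global direction angle and $\phi$, a short wedge-product calculation yields
\[
dA \wedge d\theta \;=\; -\cos\phi\, ds \wedge dt \wedge d\phi,
\]
so the density of $dA \otimes d\theta$ in these coordinates is $|\cos\phi|$. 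The crucial observation is that $U$ acts in these coordinates as the involution $(s,t,\phi) \mapsto (s,t,-\phi)$: by the classical law of reflection, the ray emerging from $p'$ in direction $u$ hits $\partial S$ first at the same point $w$, at the same distance $t$, and makes angle $-\phi$ with $n(s)$. Since $|\cos\phi|$ is even, this involution preserves $|\cos\phi|\, ds\, dt\, d\phi$, and hence $U$ preserves $dA \otimes d\theta$. Composing with $R$ proves invariance of $F$.

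The main obstacle is the Jacobian computation in the tubular coordinates, but it is routine once the Frenet frame is in place; conceptually the point is that the classical density $|\cos\phi|\, ds\, dt\, d\phi$ familiar from billiard theory is tailor-made so that the symmetry $\phi \leftrightarrow -\phi$ is measure-preserving, which is exactly what the unfolded step $U$ implements. The infinite-mass assertion is trivial: $V_S$ contains base points $p$ arbitrarily far from $S$ together with a non-trivial cone of admissible directions $v$.
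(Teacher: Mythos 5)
Your proof is correct, but it takes a genuinely different route from the paper. The paper first checks the statement for a closed disc (where $V_S$ foliates into invariant circles on which $F$ acts as a rigid rotation) and then transfers the Jacobian computation to a general $C^2$ body via the osculating disc at the bounce point, using second-order tangency to conclude $DF_S(p,v)=DF_D(p,v)$; an alternative direct $3\times 3$ Jacobian computation in Cartesian coordinates centered at $w$ is also sketched, yielding determinant $-1$ before the visibility reflection. Your argument instead factors $F=R\circ U$ and passes to the classical billiard-type coordinates $(s,t,\phi)$ with density $|\cos\phi|$, in which $U$ is literally the involution $\phi\mapsto-\phi$; this is cleaner in that the curvature terms cancel once and for all in the coordinate change (exactly as in your computation, where the $tk$ contributions from $\partial p/\partial s$ cancel against the $k\,\partial\theta/\partial s$ term), and it exposes the reversibility structure $F=R\circ U$ with $U^2=R^2=\mathrm{id}$, which the paper's argument does not make visible. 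What the paper's osculating-disc argument buys in exchange is that it avoids the Frenet computation entirely and reduces everything to the transparent disc case. Two minor points: when claiming $U$ acts as $(s,t,\phi)\mapsto(s,t,-\phi)$ you should note that convexity guarantees the ray from $p'$ toward $w$ meets $S$ \emph{first} at $w$ (the reflected direction has positive outward normal component, so the segment $[w,p']$ leaves the supporting half-plane of $S$ at $w$); and the infinite-mass remark is a little glib, since the visibility cone at $p$ shrinks like $1/\|p\|$ — the conclusion still holds because the fiber measure decays only like $1/r$ while the area element grows like $r\,dr$, or more simply one can integrate in the order the paper uses, fixing the direction $\theta$ first and observing an infinite strip of constant width.
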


\begin{remark}
 It is easy to verify invariance of the Lebesgue measure for polygons and seems likely to be true for any convex $S$, but we haven't verified it in such generality.
\end{remark}

\begin{proof}
    We begin by noticing that the proposition holds true if $S$ is a closed disc. Indeed, in this case it is easy to see that the visibility domain can be decomposed into circles on each of which $F$ is a rigid rotation preserving the length (conditional measure). Hence $F$ preserves $dA\otimes d\theta$.

    Now given a general domain $S$ with $C^2$ boundary we will verify that $F_S$ is measure preserving by checking that the Jacobian $JF=\det(DF)$ equals to 1. Let $(p,v)\in~V_S$, we can assume that $(p,v)$ is in fact in the interior of $V_S$ since $\partial V_S$ has measure zero. Therefore the ray starting at $v$ intersect $\partial S$ at the bounce point $w$ transversely. This implies that infinitesimal variations of $(p,v)$ result in infintesimal variations of $w$ of the same order of magnitude. 

    Consider the (unique) closed disc $D$ such that $\partial D$ is tangent to $\partial S$ at $w$ to the second order. Clearly, we have $F_S(p,v)=F_D(p,v)$. In fact, second order tangency ensures that $DF_S(p,v)=DF_D(p,v)$. Indeed, to see that first note that infinitesimal variation of $(p,v)$ results in infenitesimal variations of $w$ (on $\partial S$ and $\partial D$) agree up to second order. The angles of reflection of $w$ are controlled by the derivatives of $\partial S$ and $\partial D$ at $w$ and, hence, agree up to the first order (again due to second order tangency at $w$) and the claim follows. Hence
    we have 
    $$
    J F_S(p,v)=\det DF_S(p,v)=\det DF_D(p,v)=1,
    $$
    where the last equality is by measure-preserving property of $F_D$ pointed at out the beginning of the proof.

    It is easy to show that the total Lebesgue measure of $V_S$ is infinite when integrating in the correct order. For any angle $\theta$, there is an infinitely long strip of constant width $w$ of points that will hit the $S$, where $w$ is the length of $S$ projected to the axis perpendicular to $\theta$.
\end{proof}

Alternatively one can verify the above proposition by a direct calculation of $DF(p,v)$ without any reference to the disc case. Specifically we can center the $(x,y)$-coordinate system at the bounce point $w$ so that the $x$-axis is tangent to $\partial S$ and denote by $\theta$ the circular coordinate. We write $(p,v)=(a,b, \theta_0)$. Clearly $dA\otimes d\theta=dx\otimes dy\otimes d\theta$ and we need to verify that the Jacobian is 1 in $(x,y,\theta)$-coordinates. Prior to the last visibility angle reflection step we have  $(a,b,\theta_0)\mapsto (-a,b,-\theta_0)$ and a routine calculation gives the following expression for its derivative:
$$
\begin{pmatrix}
1+2kb & -2ka-\frac{2a}{b} & \frac{2c^2}{b}+2kc^2 \\
2ka & 1-\frac{2ka^2}{b} & \frac{2kac^2}{b}  \\
-2k & \frac{2ka}{b} & -1 - \frac{2kc^2}{b}
\end{pmatrix}
$$
where $k$ is the curvature at $w$ and $c=\sqrt{a^2+b^2}$. The determinant of this matrix is $-1$, which becomes 1, after composing with the reflection $\theta\mapsto const-\theta$ according to the visibility angle reflection rule.

\section*{Appendix B: Proof of Theorem~\ref{thmP}}\label{appendix}

First, we will find an explicit formula for $r^{-1}(w, d)$ for $w$ and $d$ lying on the ellipse. We have $w = a  \cos(\theta)$ and $d = b  \sin(\theta)$, meaning $\tan(\theta) = \frac{ad}{bw}$. This yields:
\begin{equation}\label{rInv}
    \theta = r^{-1}(w, d) = \arctan\left(\frac{ad}{bw}\right) + \pi  n(w)
\end{equation}
In this formula, we use

\begin{equation*}
    n(x) := \begin{cases}
        1 & x < 0 \\
        0 & x \ge 0
    \end{cases}
\end{equation*}
to compensate for the fact that $\arctan$ only outputs between $\frac{-\pi}{2}$ to $\frac{\pi}{2}$. This explicit formula has the slight flaw that it fails for $w = 0$, however it can be shown separately that this case matches the behavior of all other cases. Applying $r^{-1}$ to the right of both sides of the equation $\overline{f} = r^{-1} \circ f \circ r$ yields $\overline{f} \circ r^{-1}(w, d) = r^{-1} \circ f(w, d) = r^{-1}(w', d')$. Applying (\ref{rInv}) yields:

\begin{equation*}
    \overline{f}(\arctan\left(\frac{ad}{bw}\right) + \pi  n(w)) = \arctan\left(\frac{ad'}{bw'}\right) + \pi  n(w')
\end{equation*}
Thus $\overline{f}(\theta) = \theta + \varphi(w, d)$, where 

\begin{equation*}
    \varphi(w, d) = \arctan\left(\frac{ad'}{bw'}\right) + \pi  n(w') - \arctan\left(\frac{ad}{bw}\right) - \pi  n(w).
\end{equation*}
We will first show that $\varphi(w, d)$ is constant mod $\pi$. The terms $\pi  n(w')$ and $-\pi  n(w)$ are equivalent to zero mod $\pi$, so we will revisit these later.

Thus, we currently seek to show that $\arctan(\frac{ad'}{bw'}) - \arctan(\frac{ad}{bw})$ is constant mod $\pi$. We will use the arc-tangent subtraction formula $\arctan(x)-\arctan(y) = \arctan(\frac{x-y}{1+xy}) + m\pi$, where $m$ is either 0 or 1 depending on $x$ and $y$. The term $m\pi$ is equivalent to zero mod $\pi$ in all cases, so we will revisit this term later as well.

For the purposes of the following calculation, we will set $x = \frac{ad'}{bw'}$ and $y = \frac{ad}{bw}$. Our goal is to show that $\frac{x-y}{1+xy}$ is constant for any $w$ and $d$ on the fixed ellipse. First, since $x$ and $y$ each have a factor of $\frac{a}{b}$, which is constant for points on the ellipse, we can pull this out of the fraction: 

\begin{equation*}
    \frac{x-y}{1+xy} = \left(\frac{a}{b}\right)\left(\cfrac{\frac{d'}{w'} - \frac{d}{w}}{1+\frac{a^2dd'}{b^2ww'}}\right)
\end{equation*}

Next, multiply the numerator an denominator by $b^2ww'$ to get:

\begin{equation*}
    \left(\frac{a}{b}\right)\left(\frac{b^2d'w - b^2dw'}{b^2ww'+a^2dd'}\right)
\end{equation*}

We can pull out another $b^2$ to bring the total constant factored out to $ab$:

\begin{equation*}
    (ab)\left(\frac{d'w-dw'}{b^2ww'+a^2dd'}\right)
\end{equation*}

After replacing $w'$ and $d'$ with their equivalent expressions in terms of $w$, $h$, and $d$, then multiplying the numerator and denominator by $(w^2-d^2-h^2-1)$, we get: 

\begin{equation*}
    \frac{(ab)(2h^2w^2+2d^2)}{b^2(w^4+d^2w^2+h^2w^2+2dw^3-w^2-2dw) + a^2(d^4+2dh^2w+2d^3w+d^2w^2+d^2h^2-d^2)}
\end{equation*}

After substituting in the expressions for $a^2$ and $b^2$, multiplying the numerator and denominator by $(1-w^2)(h^2+d^2)$, and simplifying, we get:

\begin{equation*}
    \frac{2ab(1-w^2)(h^2+d^2)}{d^2h^2w^2+h^4w^2+h^2w^4+d^4+d^2h^2+d^2w^2-h^2w^2-d^2}
\end{equation*}

\vspace{0.2 cm}

Factoring the denominator yields: 

\begin{equation*}
    \frac{2ab(1-w^2)(h^2+d^2)}{(h^2w^2+d^2)(h^2+d^2)-(h^2w^2+d^2)(1-w^2)}
\end{equation*}

Finally, dividing the numerator and denominator by $(1-w^2)(h^2+d^2)$ gives:

\begin{equation*}
    \cfrac{2ab}{\frac{h^2w^2+d^2}{1-w^2} - \frac{h^2w^2+d^2}{(h^2+d^2)}} = \frac{2ab}{b^2-a^2}
\end{equation*}

Thus we end up with the equation:

\begin{equation*}
    \varphi = \arctan\left(\frac{2ab}{b^2-a^2}\right) mod \: \pi
\end{equation*}

Next, we will go back and carefully consider each of the extra terms we set aside earlier to show that $\varphi$ is actually constant mod $2\pi$. Each of these components individually may depend on $w, d, a, $ and $b$, but we will show that together they only depend on $a$ and $b$, which remain constant within an orbit.

We will begin with the term denoted as $m\pi$ earlier. Recall that this arose out of the extra term from the arc-tangent sum formula. Again using the definitions $x = \frac{ad'}{bw'}$ and $y = \frac{ad}{bw}$, we get that $m = 0$ if $-xy < 1$ and $m = 1$ if $-xy > 1$. This is equivalent to saying $m = 0$ if $1+xy > 0$ and $m = 1$ if $1+xy < 0$. After substituting in expressions to get $1+xy$ in terms of $w$, $h$, and $d$ as well as simplifying and factoring, we get: 

\begin{equation*}
    1+xy = \frac{(d^2+h^2w^2)(d^2+h^2+w^2-1)}{(d^2+h^2)(d^2w^2 + h^2w^2+2dw^3+w^4-2dw-w^2)}
\end{equation*}

Since we are only concerned about the sign of $1+xy$, and $\frac{d^2+h^2w^2}{d^2+h^2} \ge 0$, we can factor this out and ignore it. This leaves us with:

\begin{equation*}
    \frac{d^2+h^2+w^2-1}{d^2w^2 + h^2w^2+2dw^3+w^4-2dw-w^2} = \frac{d^2+h^2+w^2-1}{w^2(d^2+h^2+w^2-1) + 2dw^3-2dw}
\end{equation*}

This has the same sign as its reciprocal, which after simplification becomes:

\begin{equation*}
    w^2 + \frac{2dw^3-2dw}{d^2+h^2+w^2-1}
\end{equation*}

Next, it will benefit us to rewrite $d$ in terms of $w$, $a$, and $b$. Starting from the ellipse equation $\frac{w^2}{a^2} + \frac{d^2}{b^2} = 1$, we can derive the equation $d^2 = b^2 - \frac{w^2b^2}{a^2}$. Rewriting our previous expression yields:

\begin{equation*}
    w^2 + \cfrac{(2w^3-w)\left(\pm \sqrt{b^2 - \frac{w^2b^2}{a^2}}\right)}{b^2 - \frac{w^2b^2}{a^2} + h^2 + w^2 - 1}
\end{equation*}

Next, we will remove $h$ from the expression. Recall the relationship between $a^2$ and $b^2$ given by $b^2 = \frac{h^2a^2}{1-a^2}$. From this, we can derive $h^2 = \frac{b^2 - a^2b^2}{a^2}$. From here, we can perform a series of simplifications:

\begin{equation*}
\begin{aligned}
    w^2 + \cfrac{(2w^3-w)\left(\pm \sqrt{b^2 - \frac{w^2b^2}{a^2}}\right)}{b^2 - \frac{w^2b^2}{a^2} + h^2 + w^2 - 1} &= w^2 + \cfrac{2w(w^2-1)\left(\pm \sqrt{b^2 - \frac{w^2b^2}{a^2}}\right)}{\frac{b^2}{a^2} - \frac{w^2b^2}{a^2} + \frac{w^2a^2}{a^2} - \frac{a^2}{a^2}} \\
    &= w^2 + \cfrac{2wa^2(w^2-1)\left(\pm \sqrt{\frac{b^2(a^2 - w^2)}{a^2}}\right)}{(w^2-1)(a^2-b^2)} \\
    &= w^2 + \cfrac{2wab(\pm \sqrt{a^2 - w^2})}{a^2 - b^2} \\
    &= \cfrac{a^2w^2 - b^2w^2 + 2wab(\pm \sqrt{a^2 - w^2})}{a^2-b^2}
\end{aligned}
\end{equation*}

Next, we want to find the zeros of this expression with respect to $w$. Clearly, $w = 0$ is a zero. To find other zeros, we will set the numerator of our expression equal to zero and assume $w \ne 0$. $a^2w^2 -b^2w^2+ 2wab(\pm \sqrt{a^2 - w^2}) = 0 \implies a^2w -b^2w+ 2ab(\pm \sqrt{a^2 - w^2}) = 0$. After rearranging and squaring both sides, we get:

\begin{equation*}
    a^2 - w^2 = \frac{b^4w^2-2a^2b^2w^2+a^4w^2}{4a^2b^2}
\end{equation*}

Solving for $w$ yields:

\begin{equation*}
w = \pm \frac{2a^2b}{a^2+b^2}
\end{equation*}

From this point forward, we will assume $d$ is positive. The calculations play out similarly if $d$ is negative, and the results will be given with the positive case. This means our fraction is zero when $a^2w -b^2w+ 2ab\sqrt{a^2 - w^2} = 0$, removing the plus or minus present earlier. We have the possible zeros of $\pm \frac{2a^2b}{a^2+b^2}$, but determining which is a true zero will depend on the values of $a$ and $b$. This is because $\sqrt{a^2 - w^2}$ becomes $\frac{a(a^2-b^2)}{a^2+b^2}$ if $a > b$ or $\frac{a(b^2-a^2)}{a^2+b^2}$ if $a < b$. This means that if $a > b$ we have $\frac{-2a^2b}{a^2+b^2}$ is a zero, whereas if $b > a$ we have $\frac{2a^2b}{a^2+b^2}$ is a zero. 

To find the sign of the expression, we can solve the derivatives at the zeros:

\begin{equation}\label{derivExp}
\begin{aligned}
    \frac{d}{dw}\left(\cfrac{a^2w^2 - b^2w^2 + 2wab\sqrt{a^2 - w^2}}{a^2-b^2}\right) = \cfrac{2a^2w-2b^2w+2ab\sqrt{a^2-w^2} - \frac{2abw^2}{a^2-w^2}}{a^2-b^2}
\end{aligned}
\end{equation}

We will first analyze the derivative values for $a > b$. The denominator is positive in this case, and we have that the derivative is positive at $w=0$. For $w = \frac{-2a^2b}{a^2+b^2}$, we have that the numerator of (\ref{derivExp}) becomes:

\begin{equation*}
\frac{2a^2b(b^2-a^2)}{a^2+b^2} + \frac{-2abw^2}{a^2-w}
\end{equation*}

The second term of this expression is always negative. Since we assumed $a > b$, the first term is also negative, meaning the derivative is negative for this $w$-value.

In the case where $a < b$, the denominator is always negative, and we have that the derivative is negative at $w=0$. For $w = \frac{2a^2b}{a^2+b^2}$, the numerator becomes:

\begin{equation*}
\frac{2a^2b(a^2-b^2)}{a^2+b^2} + \frac{-2abw^2}{a^2-w}
\end{equation*}

This is again negative, but the derivative is positive since the denominator of (\ref{derivExp}) is negative. In summary, we have the following results:
\vspace{0.1in}

\noindent\fbox{\begin{minipage}{\textwidth}
\paragraph{In $d > 0$ Case:} 
$\;$ If $a < b$ and $w \in (\frac{2a^2b}{a^2+b^2}, 1] \cup [-1, 0)$, then $1+xy > 0$. 

$\;$ If $a < b$ and $w \in (0, \frac{2a^2b}{a^2+b^2})$, then $1+xy < 0$.

$\;$ If $a > b$ and $w \in [-1, \frac{-2a^2b}{a^2+b^2}) \cup (0, 1]$, then $1+xy > 0$.

$\;$ If $a > b$ and $w \in (\frac{-2a^2b}{a^2+b^2}, 0)$, then $1+xy < 0$.
\end{minipage}}
\\~\\
The case when $d < 0$ works similarly, with results as follows:
\\~\\
\noindent\fbox{\begin{minipage}{\textwidth}
\paragraph{In $d < 0$ Case:} 
$\;$ If $a < b$ and $w \in [-1, \frac{-2a^2b}{a^2+b^2}) \cup (0, 1]$, then $1+xy > 0$.

$\;$ If $a < b$ and $w \in (\frac{-2a^2b}{a^2+b^2}, 0)$, then $1+xy < 0$.

$\;$ If $a > b$ and $w \in (\frac{2a^2b}{a^2+b^2}, 1] \cup [-1, 0)$, then $1+xy > 0$.

$\;$ If $a > b$ and $w \in (0, \frac{2a^2b}{a^2+b^2})$, then $1+xy < 0$. 
\end{minipage}}
\\~\\

Next, we will tackle the $\pi  n(w')$ term. Recall that $n(w')$ is defined to be 1 when $w'$ is negative and 0 otherwise. Thus, our next goal is to determine the sign of $w'$ under all possible conditions. We will again take $d > 0$ and present the results for the $d < 0$ case later.

\begin{equation}
\begin{aligned}
    w' &= \frac{w^3+d^2w+h^2w+2dw^2-w-2d}{w^2-d^2-h^2-1} \\
    &= \cfrac{w^3-\frac{b^2w^3}{a^2}+\frac{b^2w}{a^2}+2w^2\sqrt{b^2-\frac{b^2w^2}{a^2}}-w-2\sqrt{b^2-\frac{b^2w^2}{a^2}}}{w^2+\frac{b^2w^2}{a^2}-\frac{b^2}{a^2}-1} \\
    &= \frac{(w^2-1)(a^2w-b^2w+2ab\sqrt{a^2-w^2})}{(w^2-1)(a^2+b^2)} \\
    &= \frac{a^2w-b^2w+2ab\sqrt{a^2-w^2}}{a^2+b^2}
\end{aligned}
\end{equation}

We have already examined the zeros of this expression when working through the $m\pi$ term. It has a zero at $w = \frac{2a^2b}{a^2+b^2}$ when $a < b$ and one at $\frac{-2a^2b}{a^2+b^2}$ when $a > b$. Notably, this expression does not have a zero at $w = 0$ like the previous. In this case we have that the derivative is negative at $w = \frac{2a^2b}{a^2+b^2}$ when $a < b$ and positive at $\frac{-2a^2b}{a^2+b^2}$ when $a > b$.
\\~\\
\noindent\fbox{\begin{minipage}{\textwidth}
\paragraph{In $d > 0$ Case:} 
$\;$ If $a < b$ and $w \in [-1, \frac{2a^2b}{a^2+b^2})$, then $w' > 0$.

$\;$ If $a < b$ and $w \in (\frac{2a^2b}{a^2+b^2}, 1]$, then $w' < 0$.

$\;$ If $a > b$ and $w \in (\frac{-2a^2b}{a^2+b^2}, 1]$, then $w' > 0$.

$\;$ If $a > b$ and $w \in [-1, \frac{-2a^2b}{a^2+b^2})$, then $w' < 0$.
\end{minipage}}
\\~\\
Similarly, if $d < 0$, we get:
\\~\\
\noindent\fbox{\begin{minipage}{\textwidth}
\paragraph{In $d < 0$ Case:} 
$\;$ If $a < b$ and $w \in [-1, \frac{-2a^2b}{a^2+b^2})$, then $w' > 0$.

$\;$ If $a < b$ and $w \in (\frac{-2a^2b}{a^2+b^2}, 1]$, then $w' < 0$.

$\;$ If $a > b$ and $w \in (\frac{2a^2b}{a^2+b^2}, 1]$, then $w' > 0$.

$\;$ If $a > b$ and $w \in [-1, \frac{2a^2b}{a^2+b^2})$, then $w' < 0$.
\end{minipage}}
\\~\\
Finally, we have the $-\pi n(w)$ term, which requires no extra analysis since our results are currently allowed to be dependent on $w$.

The final step in this proof is to check how many $\pi$ terms are being added for each initial condition for $w$ and $d$, as well as $a$ and $b$ values. This will be omitted since it solely involves going through each relevant interval for $w$ and $d$ for both the $a > b$ and $a < b$ case. The results are as follows:

\begin{equation*}
    \varphi = \begin{cases}
        \arctan(\frac{2ab}{b^2-a^2}) + \pi & a < b \\
        \arctan(\frac{2ab}{b^2-a^2}) & a > b \\
        \frac{-\pi}{2} & a=b        
    \end{cases}
\end{equation*}

Taking the derivative of $\varphi$ with respect to $a$ yields:

\begin{equation*}
    \varphi'(a) = \cfrac{2b}{b^2+a^2}
\end{equation*}
This is clearly positive for all values $a > 0$ since $b$ is nonzero and positive for such $a$. \qed

\bibliographystyle{plain}
\nocite{*}
\bibliography{sn-bibliography.bib}





\end{document}